\documentclass[12pt]{amsart}
\usepackage{amsmath}	
\usepackage{amssymb}
\usepackage{tikz}
\usetikzlibrary{calc}

\textwidth=6.5in \hoffset=-1in \textheight=9in
\voffset=-0.25in

\setlength{\parskip}{ 1.5ex plus 0.2ex minus 0.2ex}

\newtheorem{theorem}{Theorem}[section]

\newtheorem*{theoremA}{Theorem A}
\newtheorem*{theoremB}{Theorem B}
\newtheorem*{theoremC}{Theorem C}
\newtheorem{lemma}[theorem]{Lemma}

\newtheorem*{maincorollary}{Corollary}

\theoremstyle{remark}

\theoremstyle{definition}

\newcommand{\et}{\quad\mbox{and}\quad}

\newcommand{\bP}{\mathbb{P}}
\newcommand{\bQ}{\mathbb{Q}}
\newcommand{\bR}{\mathbb{R}}
\newcommand{\bZ}{\mathbb{Z}}

\newcommand{\dist}{\mathrm{dist}}

\newcommand{\tuv}{\uv^\perp}
\newcommand{\tuw}{\uw^\perp}
\newcommand{\ud}{\mathbf{d}}
\newcommand{\uu}{\mathbf{u}}
\newcommand{\uv}{\mathbf{v}}
\newcommand{\uw}{\mathbf{w}}
\newcommand{\ux}{\mathbf{x}}
\newcommand{\uy}{\mathbf{y}}
\newcommand{\uz}{\mathbf{z}}

\newcommand{\disp}{\displaystyle}

\newcommand{\gee}{\ge^*}
\newcommand{\lee}{\le^*}

\begin{document}

\baselineskip=18pt

\title[Diophantine approximation with sign constraints]
{Diophantine approximation with sign constraints}
\author{Damien ROY}
\address{
   D\'epartement de Math\'ematiques\\
   Universit\'e d'Ottawa\\
   585 King Edward\\
   Ottawa, Ontario K1N 6N5, Canada}
\email{droy@uottawa.ca}
\subjclass[2000]{Primary 11J13; Secondary 11J82}
\thanks{Work partially supported by NSERC}

\begin{abstract}
Let $\alpha$ and $\beta$ be real numbers such that $1$, $\alpha$ and
$\beta$ are linearly independent over $\bQ$.  A classical result of
Dirichlet asserts that there are infinitely many triples of integers
$(x_0,x_1,x_2)$ such that
$|x_0+\alpha x_1+\beta x_2| < \max\{|x_1|,|x_2|\}^{-2}$.
In 1976, W.~M.~Schmidt asked what can be said under the
restriction that $x_1$ and $x_2$ be positive.  Upon denoting by
$\gamma\cong 1.618$ the golden ratio, he proved that there are triples
$(x_0,x_1,x_2) \in \bZ^3$ with $x_1,x_2>0$ for which the product
$|x_0 + \alpha x_1 + \beta x_2| \max\{|x_1|,|x_2|\}^\gamma$ is
arbitrarily small.   Although Schmidt later conjectured that
$\gamma$ can be replaced by any number smaller than $2$,
N.~Moshchevitin proved very recently that it cannot be replaced by a number
larger than $1.947$.  In this paper, we present a construction showing
that the result of Schmidt is in fact optimal.
\end{abstract}

\maketitle

\section{Introduction}
 \label{intro}

Given $\alpha,\beta\in\bR$, a well-known result of Dirichlet
asserts that, for each $X\ge 1$, there exists a non-zero
point $(x_0,x_1,x_2)\in\bZ^3$ which satisfies
\[
 |x_0+x_1\alpha+x_2\beta|\le X^{-2},
 \quad |x_1|\le X,
 \quad |x_2|\le X
\]
(see for example \cite[Chap.~II, Theorem 1A]{Sc80}).  In 1976, on
the occasion of E.~Hlawska's sixtieth birthday, W.~M.~Schmidt
proved a variation of this result in which the coordinates
$x_1$ and $x_2$ are required to be positive \cite{Sc76}.
Upon denoting by
\[
 \gamma = \frac{1+\sqrt{5}}{2} \cong 1.618
\]
the golden ratio, his result can be stated as follows.

\begin{theoremA}[Schmidt]
Let $\alpha,\beta\in\bR$ and let $\epsilon>0$.  Suppose
that $1,\alpha,\beta$ are linearly independent over $\bQ$.
Then, there are arbitrarily large values of $X$ for which
the inequalities
\[
 |x_0+x_1\alpha+x_2\beta|\le \epsilon X^{-\gamma},
 \quad 0<x_1\le X,
 \quad 0<x_2\le X
\]
admit a solution $(x_0,x_1,x_2)\in\bZ^3$.
\end{theoremA}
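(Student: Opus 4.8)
The plan is to run the classical theory of best approximations for the linear form $L(\ux)=x_0+x_1\alpha+x_2\beta$ on $\bZ^3$, while keeping track of the signs of $x_1$ and $x_2$. Let $\ux_1,\ux_2,\dots$ be the minimal points of $\bZ^3$ for $L$ with respect to the height $\rmH(\ux)=\max\{|x_1|,|x_2|\}$, put $X_n=\rmH(\ux_n)$ and $L_n=|L(\ux_n)|$, so $X_1<X_2<\cdots\to\infty$ and $L_1>L_2>\cdots\to 0$. Two elementary facts drive everything. First, Dirichlet's theorem used with a parameter $Y\in[X_n,X_{n+1})$ gives a nonzero lattice point of height $\le Y$ with $|L|\le Y^{-2}$, whose value is $\ge L_n$ by minimality; letting $Y\to X_{n+1}$ this yields $L_nX_{n+1}^2\le 1$ (and likewise $L_nX_n^2\le 1$). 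Second, because $1,\alpha,\beta$ are linearly independent over $\bQ$ the vectors $\ux_{n-1},\ux_n,\ux_{n+1}$ are linearly independent over $\bR$ for all large $n$, so $\det(\ux_{n-1},\ux_n,\ux_{n+1})$ is a nonzero integer; expanding it along the row of the values $L(\ux_k)$ and using $|x_{k,i}|\le X_k$ together with the monotonicity of $L_k$ and $X_k$, every term is $\ll L_{n-1}X_nX_{n+1}$, hence $L_{n-1}X_nX_{n+1}\gg 1$. The exponent $\gamma$ will come from pitting these two inequalities against each other.

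Several configurations are handled at once. If infinitely many minimal points lie in the open quadrant $\{x_1>0,\ x_2>0\}$ then the theorem follows trivially with the stronger exponent $2>\gamma$, since $\ux_n$ itself satisfies $|L(\ux_n)|=L_n\le X_n^{-2}\le\epsilon X_n^{-\gamma}$ for $X_n$ large. One disposes separately of the degenerate situations in which some coordinate of a minimal point vanishes (a one-variable continued-fraction convergent, to be corrected by a small convergent in the other variable) or in which the $(x_1,x_2)$-projections of the minimal points cluster, up to bounded error, near a rational line through the origin (this would force a $\bQ$-linear relation among $1,\alpha,\beta$). After these reductions one may assume that every large minimal point has sign pattern $(+,-)$ or $(-,+)$; equivalently, the projections $(x_{n,1},x_{n,2})$ lie in the two "anti-diagonal" quadrants, and if they are moreover tightly clustered near a line, that line is irrational. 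A bounded-window analysis of the sign patterns then produces, infinitely often, two indices $i<j$ and signs $\eta,\eta'\in\{\pm 1\}$ for which the cone positively spanned by the projections of $\eta\ux_i$ and $\eta'\ux_j$ contains the open first quadrant; the admissible multipliers $a/b$ fill an interval of length $1/|x_{i,1}x_{j,2}-x_{i,2}x_{j,1}|$, the reciprocal of a positive integer, and one retains a point $a\eta\ux_i+b\eta'\ux_j$ of the open positive quadrant whose height $H$ and value $\ell$ are controlled in terms of $X_i,X_j,L_i,L_j$.

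The crux is to choose the indices and multipliers so that $\ell H^\gamma\to 0$ along a subsequence. When the projections are not tightly clustered — their pairwise "cross products" reach size comparable to the square of the height infinitely often — a bounded pair $(a,b)$ suffices at a scale where neighbouring heights are comparable, giving $H\sim X$, $\ell\ll X^{-2}$, hence $\ell H^\gamma\ll X^{\gamma-2}\to 0$. The genuine difficulty is the remaining regime, where the minimal points hug an irrational plane $\Pi$ and, typically, the heights $X_n$ grow in large jumps. There the right object to use at a scale $X$ just below a best-approximation height $X_{m+1}$ is the preceding minimal point $\ux_m$: both its height $X_m/X$ and (because $L_m\le X_{m+1}^{-2}\le X^{-2}$) its scaled value $X^\gamma L_m$ are then small, so $\ux_m$ is a very short vector in the rescaled picture, and one only has to add to it a bounded, sign-correcting combination of neighbouring minimal points — which are themselves short in the direction of $\Pi$ — to land in the open positive quadrant without undoing that smallness. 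The size of the correction one can afford and the size of the jump $X_m\to X_{m+1}$ are linked, through $L_mX_{m+1}^2\le 1$ and $L_{m-1}X_mX_{m+1}\gg 1$, by a Fibonacci-type recursion $\log X_{m+1}\approx\log X_m+\log X_{m-1}$, i.e. $\log X_{m+1}\approx\gamma\log X_m$, and this is exactly where the golden ratio enters, via $\gamma^2=\gamma+1$; any departure from the extremal profile, together with the slack the determinant bound forces into Dirichlet's inequality infinitely often, yields $\ell H^\gamma\to 0$ along a subsequence, whence the factor $\epsilon$. Setting $X=H$ then gives the theorem. I expect the real obstacle to be precisely this last regime: carrying out the sign bookkeeping near a plane in full, and verifying that the extremal recursion saturates at $\gamma$ and at no smaller exponent.
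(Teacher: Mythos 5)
The paper does not itself prove Theorem A: it cites Schmidt's 1976 paper and, after Theorem B, gives only a brief outline in which Schmidt argues by contradiction, produces a sequence of elongated parallelepipeds via Minkowski's first theorem, extracts a minimal-point-type sequence $(\ux_i)$ with $|\ux_i\cdot\uu|\le C X_{i+1}^{-\gamma-1}$, and finishes with geometry of numbers.  Your proposal starts from the same place in substance: the two inequalities you write down, $L_nX_{n+1}^2\le 1$ (Dirichlet plus minimality) and $L_{n-1}X_nX_{n+1}\gg 1$ (nonvanishing determinant of three consecutive minimal points), are exactly the inequalities that Schmidt's box construction encodes, and working directly with minimal points instead of parallelepipeds is an equivalent formulation.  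So the skeleton is the right one.

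There is, however, a genuine gap, which you yourself flag in your last sentence: the entire hard regime — the one where the minimal points cluster near an irrational plane and the heights $X_n$ jump — is not carried out.  The step \emph{``one only has to add to it a bounded, sign-correcting combination of neighbouring minimal points \dots\ to land in the open positive quadrant without undoing that smallness''} is the proof, and it is not supplied: at a scale $X$ near $X_{m+1}$ the rescaled projection of $\ux_m$ sits near the origin, and it is not obvious that \emph{bounded} combinations of neighbouring minimal points (whose own projections are nearly collinear with that of $\ux_m$ in this regime) can steer the projection into the open first quadrant while keeping $|L|\ll X^{-\gamma}$.  Moreover, the claimed recursion $\log X_{m+1}\approx\log X_m+\log X_{m-1}$ does not follow from the two cited inequalities by themselves: combining $L_{m-1}\le X_m^{-2}$ with $L_{m-1}X_mX_{m+1}\gg 1$ yields only $X_{m+1}\gg X_m$.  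The Fibonacci-type relation, hence the exponent $\gamma$, has to emerge from the trade-off between the size of the sign-correcting coefficient one can afford (roughly $X_m/X_{m-1}$) and the bound $\ell H^\gamma\to 0$ one wants; this derivation, and the verification that the extremal profile saturates at $\gamma$ and nowhere below, is precisely what is missing.  As written this is a sound plan along Schmidt's lines, but not a proof: the case you identify as ``the real obstacle'' is indeed the whole difficulty, and it is left unresolved.
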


In \cite{Sc76}, Schmidt made several comments on this
result.  First, he gave an example showing that it is
false without the condition that $1,\alpha,\beta$ are
linearly independent over $\bQ$.  He also added a remark
about the general situation involving a larger number of
variables $x_0,\dots,x_k$. This was further studied by
P.~Thurnheer in \cite{T90}.  Finally, he asked if the
exponent $\gamma$ can be replaced by a larger number
without invalidating the statement of Theorem A.  Then,
several years later, he conjectured in \cite{Sc83} that
any number smaller than $2$ would do.  This conjecture
was recently disproved by N.~Moshchevitin who showed
in \cite{Mo} by a nice geometric construction that the
statement of Theorem A becomes false if $\gamma$ is
replaced by any number larger than $1.947$, or more
precisely by any number larger than the largest real root
of $t^4-2t^2-4t+1$.  In this paper, we show that, in fact,
the exponent $\gamma$ in Theorem A is already optimal.

\section{Main result and notation}
 \label{main}

For any $\ux,\uy\in\bR^3$, we denote by $\ux\cdot\uy\in\bR$
their scalar product and by $\ux\wedge\uy\in\bR^3$ their
cross-product.  We also denote by $\|\ux\|=(\ux\cdot\ux)^{1/2}$
the Euclidean norm of $\ux$.  When $\ux,\uy$ are non-zero,
we further define their \emph{projective distance} by
\[
 \dist(\ux,\uy)
  = \frac{\|\ux\wedge\uy\|}{\|\ux\|\,\|\uy\|}
  = \sin(\angle(\ux,\uy))
\]
where $\angle(\ux,\uy)$ stands for the acute angle between
the lines $\bR\ux$ and $\bR\uy$ spanned by $\ux$ and $\uy$.
This number depends only on the classes of $\ux$ and $\uy$
in $\bP^2(\bR)$ and it can be shown that the induced function
$\dist\colon\bP^2(\bR)\times\bP^2(\bR)\to [0,1]$ is a true
distance function on $\bP^2(\bR)$, namely that
\[
 \dist(\ux,\uz)\le \dist(\ux,\uy)+\dist(\uy,\uz)
\]
for any $\ux,\uy,\uz\in\bR^3\setminus\{0\}$.  For a point
$\ux\in\bR^3\setminus\{0\}$ and a set $S\subset \bR^3
\setminus\{0\}$, we define
\[
 \dist(\ux,S)=\inf_{\uy\in S}\dist(\ux,\uy).
\]
In \cite{Sc76},
Schmidt proves Theorem A in a slightly more general form,
essentially equivalent to the following.

\begin{theoremB}[Schmidt]
Let $\uu,\ud\in\bR^3$ with $\uu\cdot\ud=0$.  Suppose that
$\uu$ has $\bQ$-linearly independent coordinates and that
$\ud\neq 0$.  Then, for any $\delta>0$ and any $\epsilon>0$,
there exists a non-zero point $\ux\in\bZ^3$ such that
\[
 \dist(\ux,\ud)\le \delta
 \et
 |\ux\cdot\uu| \le \epsilon \|\ux\|^{-\gamma}.
\]
\end{theoremB}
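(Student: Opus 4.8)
The plan is to build the required point $\ux$ inside a rational plane that is extremely close to $\uu^\perp$, so that $|\ux\cdot\uu|\le\epsilon\|\ux\|^{-\gamma}$ is almost automatic, and then to use two--dimensional geometry of numbers inside that plane to steer $\ux$ towards $\ud$. First I would normalise $\|\uu\|=\|\ud\|=1$ and introduce the sequence of \emph{minimal points} $\ux_1,\ux_2,\dots\in\bZ^3$ attached to the linear form $\ell(\ux)=\ux\cdot\uu$, i.e.\ the successive best approximations, with $X_i=\|\ux_i\|$ strictly increasing to $\infty$, $L_i=|\ell(\ux_i)|$ strictly decreasing, consecutive $\ux_i,\ux_{i+1}$ linearly independent, and --- applying Minkowski's convex body theorem to $\{\,\ux:|\ell(\ux)|<L_i,\ \|\ux\|<X_{i+1}\,\}$, which contains no non-zero integer point --- the gap estimate $L_iX_{i+1}^2\ll 1$. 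The hypothesis that $\uu$ has $\bQ$-linearly independent coordinates ensures that $L_i\to0$, so this sequence is infinite.

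Fix $i$, put $P_i=\langle\ux_i,\ux_{i+1}\rangle$ and let $\uw_i=\ux_i\wedge\ux_{i+1}$, a non-zero integer normal to $P_i$. From $(\ux_i\wedge\ux_{i+1})\wedge\uu=(\ux_i\cdot\uu)\ux_{i+1}-(\ux_{i+1}\cdot\uu)\ux_i$ we get $\|\uw_i\wedge\uu\|\le L_iX_{i+1}+L_{i+1}X_i\le2L_iX_{i+1}$, hence
\[
 \dist(\uw_i,\uu)\le\frac{2L_iX_{i+1}}{\|\uw_i\|}.
\]
Since $\ud\in\uu^\perp$, this bounds $\dist(\ud_i,\ud)$, where $\ud_i$ is the orthogonal projection of $\ud$ onto $P_i$, and --- crucially --- every $\uy\in\bZ^3\cap P_i$ satisfies $|\uy\cdot\uu|\le\|\uy\|\dist(\uw_i,\uu)$. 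Writing $\Lambda_i=\bZ^3\cap P_i$, minimality of $\ux_{i+1}$ combined with the last display forces $\|\uy\|\gg\|\uw_i\|/X_{i+1}$ for every non-zero $\uy\in\Lambda_i$; in particular, away from the degenerate situation in which $\ux_i,\ux_{i+1}$ are nearly parallel (when $\|\uw_i\|$ is too small), one has $\|\uw_i\|\asymp X_iX_{i+1}$, the shortest vector of $\Lambda_i$ has norm $\asymp X_i$, and $\mathrm{covol}(\Lambda_i)=\|\uw_i\|\le X_iX_{i+1}$.

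The core of the argument is then a two-dimensional problem inside $P_i$: find $\uy\in\Lambda_i\setminus\{0\}$ with $\dist(\uy,\ud_i)\le\delta/2$ and with $\|\uy\|$ as small as the shape of $\Lambda_i$ permits. This is a best-approximation problem for the line $\bR\ud_i$ in the lattice $\Lambda_i$, whose answer is governed by $\mathrm{covol}(\Lambda_i)$, the shortest vector, and $\delta$; feeding the resulting bound for $\|\uy\|$ into $|\uy\cdot\uu|\le\|\uy\|\dist(\uw_i,\uu)$ and into $\dist(\uy,\ud)\le\dist(\uy,\ud_i)+\dist(\ud_i,\ud)\le\delta$, and then optimising the one remaining free parameter, the inequality $|\uy\cdot\uu|\le\epsilon\|\uy\|^{-\gamma}$ will follow as soon as an inequality of the shape $X_{i+1}^{\gamma-1}\ll X_i$ holds (after using $L_iX_{i+1}^2\ll1$ and $\|\uw_i\|\asymp X_iX_{i+1}$). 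The golden ratio is forced here and nowhere else: the identity $\gamma^2=\gamma+1$, equivalently $1/(\gamma-1)=\gamma$, is exactly what makes the product $|\uy\cdot\uu|\,\|\uy\|^{\gamma}$ come out bounded precisely when the growth ratio of the norms $X_i$ reaches the value $\gamma$.

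I expect the real difficulties to lie in the cases the clean argument above does not cover. One must treat the degenerate pairs ($\ux_i,\ux_{i+1}$ nearly parallel, so that $P_i$ need not be close to $\uu^\perp$) by working with another pair of minimal points or with the span of three consecutive ones; and one must treat large jumps $X_{i+1}\gg X_i^{\gamma}$, where the above reduction fails but $L_i$ is then so small that $\ux_i$ is itself an extraordinarily good --- albeit misdirected --- approximant, which forces one to re-enter the plane $P_{i-1}$ or $P_i$ through the secondary best approximations. The genuinely delicate point, and the one I expect to be the crux, is to pass from a \emph{bounded} product to an \emph{arbitrarily small} one: in the extremal regime $X_{i+1}\asymp X_i^{\gamma}$ for all large $i$, the crude worst-case bound on $\|\uy\|$ no longer suffices, and one has to exploit finer information about the shapes of the very oblique lattices $\Lambda_i$ and about the position of $\bR\ud_i$ inside them --- in effect, to show that the relevant slopes cannot be simultaneously and uniformly badly approximable --- before the free parameter $\epsilon$ can be absorbed.
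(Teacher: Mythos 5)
Your approach is genuinely different from Schmidt's, and the difference is not cosmetic: you attempt a \emph{direct} construction from the minimal points of the linear form $\ell(\ux)=\ux\cdot\uu$, whereas Schmidt argues \emph{by contradiction}. His contradiction hypothesis --- that every good approximation $|\ux\cdot\uu|\le\epsilon\|\ux\|^{-\gamma}$ satisfies $\dist(\ux,\ud)>\delta$ --- is not a mere framing device; it is the engine of the proof. Feeding it into a Minkowski parallelepiped with faces perpendicular to $\uu$, $\ud$, $\uu\wedge\ud$ and elongated along $\ud$, he forces the nonzero integer point produced by Minkowski into the two narrow ends of the box, which upgrades the Dirichlet exponent on $\ell$ from $2$ to $\gamma+1=\gamma^2$. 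This supplies a best-approximation sequence with $L_i\le CX_{i+1}^{-\gamma-1}$, a dramatically stronger decay than the unconditional bound $L_i\ll X_{i+1}^{-2}$ that your minimal points satisfy, and it is this improvement that eventually collides with the contradiction hypothesis. Your plan never acquires this extra decay because you drop the hypothesis.

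This is precisely why the cases you flag as ``expected difficulties'' are not loose ends but the heart of the matter. With only $L_i\ll X_{i+1}^{-2}$ and $\|\uw_i\|\le X_iX_{i+1}$, your two-dimensional reduction in the plane $P_i$ produces a bounded product $|\uy\cdot\uu|\,\|\uy\|^\gamma$ only under the side condition $X_{i+1}\lesssim X_i^{\gamma}$, and you correctly note that large gaps $X_{i+1}\gg X_i^\gamma$ and nearly-parallel pairs break the estimate; your proposed remedies (passing to adjacent planes, secondary best approximations) are not worked out and are not clearly salvageable without re-importing Schmidt's hypothesis. More seriously still, even in the clean regime your bound is only $O(1)$, not $o(1)$ as $\epsilon\to0$ requires, and your final observation that ``the relevant slopes cannot be simultaneously and uniformly badly approximable'' is exactly what the paper's Theorem C shows to be essentially false: there are $\uu$ for which the product $|\ux\cdot\uu|\,\|\ux\|^\gamma$ stays above $1/\psi(\|\ux\|)$ for every direction $\ud$ outside two exceptional lines, with $\psi$ growing as slowly as one likes. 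So the ``crux'' you identify is genuinely a cliff edge, and as written the proposal does not get over it. A correct route should either adopt Schmidt's contradiction scheme outright, or find a substitute for the improved Dirichlet bound $L_iX_{i+1}^{\gamma+1}\ll1$ that the contradiction hypothesis provides.
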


To recover Theorem A from Theorem B, it suffices to apply
the latter to the points $\uu=(1,\alpha,\beta)$ and
$\ud=(-\alpha-\beta,1,1)$ with $\delta$ fixed but small enough
so that any non-zero point $\ux\in\bZ^3$ in the cone defined by
$\dist(\ux,\ud)\le \delta$ has its last two coordinates of the
same sign.  Then, for $\ux$ as in Theorem B, the point $\pm\ux$
provides a solution of the system in Theorem A with $X=\|\ux\|$.
Moreover, by letting $\epsilon$ go to $0$, we can make $X$
arbitrarily large.

Our main result is the following.

\begin{theoremC}
Let $\psi\colon(1,\infty)\to(0,\infty)$ be an unbounded strictly
increasing function, let $\ux_0$ be a non-zero point of $\bZ^3$
and let $\delta>0$.  Then, there exist linearly
independent unit vectors $\uu$, $\uv$, $\uw$ in $\bR^3$ and
positive constants $C$, $C'$ with the following properties.
\begin{itemize}
\item[(i)] The coordinates of $\uu$ are linearly independent
     over $\bQ$.
\item[(ii)] We have $\uu\cdot\uv=\uu\cdot\uw=0$, $\dist(\ux_0,\uv)\le \delta$,
    $\dist(\ux_0,\uw)\le \delta$.
\item[(iii)] For each $X\ge 1$, there exists a non-zero point
    $\ux\in\bZ^3$ such that
    \[
    \|\ux\|\le X,\quad
    |\ux\cdot\uu|\le \frac{C}{X^{\gamma+1}},\quad
    \min\{|\ux\cdot\tuv|, |\ux\cdot\tuw|\} \le \frac{C}{X^{\gamma+1}},
    \]
    where $\tuv=\uu\wedge\uv$ and $\tuw=\uu\wedge\uw$.
\item[(iv)] For each $\ux\in\bZ^3$ with $\|\ux\|\ge C'$, we have
    $\disp
    |\ux\cdot\uu|
    \ge \frac{\dist(\ux\,,\{\uv,\uw\})}{\psi(\|\ux\|)\|\ux\|^{\gamma}}.
    $
\end{itemize}
\end{theoremC}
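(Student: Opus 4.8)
The plan is to realize $\uu$, $\uv$, $\uw$ as limits attached to a single recursively defined sequence of primitive integer points, in the spirit of the known constructions of extremal and Fibonacci-type real numbers. Fix a large real parameter $Q>1$ and an initial block of integer points lying in an extremely thin cone around $\ux_0$, and from it define a sequence $\uy_0,\uy_1,\uy_2,\dots\in\bZ^3$ by a fixed rule of Fibonacci type (built from cross products and cancellation of integer content), designed so that the determinants $\det(\uy_{n-1},\uy_n,\uy_{n+1})$ stay controlled and so that: (a) $\|\uy_{n+1}\|\asymp\|\uy_n\|\,\|\uy_{n-1}\|$, whence $\|\uy_n\|\asymp Q^{F_n}$ for the Fibonacci numbers $F_n$ and, using $\gamma^2=\gamma+1$, $\|\uy_{n+1}\|\asymp\|\uy_n\|^{\gamma}$; (b) all $\uy_n$ remain in the thin cone and the directions $\bR\uy_n$ converge, along each residue class of $n$ modulo a fixed period, to limit lines; (c) along a periodic pattern of residue classes the points approach a limit line $\bR\uv$ extremely rapidly, along another they approach a limit line $\bR\uw$ extremely rapidly, and the remaining points are merely of ``Dirichlet quality'' and serve only to drive the recursion. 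One then takes $\uv,\uw$ to be the corresponding unit vectors (automatically within $\delta$ of $\ux_0$) and $\uu$ the unit vector in the direction of $\lim_n\uy_n\wedge\uy_{n+1}$, a limit that exists because $\dist(\uy_n\wedge\uy_{n+1},\uy_{n+1}\wedge\uy_{n+2})=|\det(\uy_n,\uy_{n+1},\uy_{n+2})|\,\|\uy_{n+1}\|/(\|\uy_n\wedge\uy_{n+1}\|\,\|\uy_{n+1}\wedge\uy_{n+2}\|)$ is summable. Since $\uv$ and $\uw$ lie in every limit plane $\lim_n(\bR\uy_n+\bR\uy_{n+1})$, we get $\uu\cdot\uv=\uu\cdot\uw=0$, which is (ii). For (i), the construction carries free real parameters (in the initial block, or a choice repeated infinitely often); the set of parameter values for which the coordinates of the resulting $\uu$ become $\bQ$-linearly dependent is a countable union of proper closed sets, so a Baire-category argument yields an admissible choice.

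For (iii), given $X\ge 1$ one takes the largest index $n$ with $\|\uy_n\|\le X$ among those $n$ for which $\uy_n$ is one of the rapidly converging points (close to $\uv$ or to $\uw$), and outputs $\ux=\uy_n$. In the orthonormal frame $\{\uu,\uv,\tuv\}$ (or $\{\uu,\uw,\tuw\}$) both $|\ux\cdot\uu|$ and $|\ux\cdot\tuv|$ are bounded by $\|\uy_n\|\,\dist(\uy_n,\uv)$, and the convergence rates in (a)--(c), together with the spacing of these ``useful'' indices, are calibrated so that this is $\le C/\|\uy_{n'}\|^{\gamma+1}\le C/X^{\gamma+1}$, where $\uy_{n'}$ is the next useful point. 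For (iv) one argues by contradiction: suppose some $\ux\in\bZ^3$ with $\|\ux\|$ large fails the inequality. If $\ux$ is a multiple of some $\uy_m$, one reduces to the case $\ux=\uy_m$ (using that $\psi$ is increasing and that $k^{\gamma+1}\ge1$ for $k\in\bN$) and invokes the two-sided estimate for $|\uy_m\cdot\uu|$ versus $\dist(\uy_m,\{\uv,\uw\})$ that comes out of the construction. Otherwise, choosing $n$ with $\|\uy_n\|\le\|\ux\|<\|\uy_{n+1}\|$, one considers a linearly independent triple $\uy_i,\uy_j,\ux$ with $i<j\le n$ where $\uy_i,\uy_j$ converge to different limit lines (so that $\|\uy_i\wedge\uy_j\|\asymp\|\uy_i\|\,\|\uy_j\|\ll\|\ux\|^{\gamma}$); such a pair can be found unless $\ux$ is proportional to some $\uy_m$, the case already treated. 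Then $|\det(\uy_i,\uy_j,\ux)|\ge1$, and expanding this determinant in the frame $\{\uu,\uv,\tuv\}$ the dominant contribution is $\|\uy_i\wedge\uy_j\|\,|\ux\cdot\uu|$ while all other terms are $<1/2$ by the estimates already established on $\uy_i,\uy_j$; hence $|\ux\cdot\uu|\gg\|\uy_i\wedge\uy_j\|^{-1}\gg\|\ux\|^{-\gamma}\ge\dist(\ux,\{\uv,\uw\})\,\|\ux\|^{-\gamma}$, which exceeds $\dist(\ux,\{\uv,\uw\})/(\psi(\|\ux\|)\|\ux\|^{\gamma})$ as soon as $\psi(\|\ux\|)$ exceeds an absolute constant, i.e. for $\|\ux\|\ge C'$.

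The heart of the matter, and the main obstacle, is the design of the recursion and of the periodic pattern of target directions so that all of (a)--(c) hold simultaneously with the numerology consistent. One must reconcile the upper bound on $|\uy_n\cdot\uu|$ forced by (iii) with the lower bound forced by (iv); one must respect the gap principle (the fact that $\|\uy_i\wedge\uy_j\|$ is a nonzero integer when $\uy_i,\uy_j$ converge to the same line), which, because a fixed line cannot be approximated too well too often, compels the ``useful'' points to be exponentially sparse with precisely the ratio $\gamma^{2}$ and dictates exactly how closely they may approach $\uv$ and $\uw$; and one must keep the determinants $\det(\uy_n,\uy_{n+1},\uy_{n+2})$ small enough for $\uu$ to be well defined while checking that no unforeseen integer point is an abnormally good zero of the linear form $\ux\mapsto\ux\cdot\uu$. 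By comparison, the passage to the limit, the orthogonality relations, and the $\bQ$-linear independence of the coordinates of $\uu$ are routine.
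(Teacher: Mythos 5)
Your overall framework --- a recursively constructed sequence of primitive integer points whose directions alternate towards two limit lines $\bR\uv$ and $\bR\uw$, with the orthogonal limit vector playing the role of $\uu$ --- is indeed the right shape, and it matches the spirit of the paper. But you stop precisely where the actual work lies, as you concede yourself when you write that ``the heart of the matter, and the main obstacle, is the design of the recursion and of the periodic pattern of target directions.''

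The crucial gap is the rigidity of your proposed growth. You fix $Q$ once and for all and take $\|\uy_n\|\asymp Q^{F_n}$, so that each norm is about the $\gamma$-th power of the previous one. This cannot accommodate an arbitrary unbounded $\psi$: part~(iv) requires, roughly, that the denominator $\psi(\|\ux\|)\|\ux\|^{\gamma}$ absorbs the loss coming from the index $i$ for which $\|\ux\|$ falls between consecutive scales, and when $\psi$ grows very slowly this forces the $i$-th scale $X_{i+1}$ to be \emph{arbitrarily} large compared with $X_i$ (the paper in fact imposes $\psi(X_{i+1}/X_1)\ge X_1^3X_i$ at each step, so the recursion depends on $\psi$). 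With a fixed Fibonacci-type schedule this is impossible: you would need $\psi(y)\gg y^{1/\gamma}$, which fails for slowly increasing $\psi$. The entire technical novelty of the paper is Lemma~\ref{lemma1}, which solves exactly this problem: after putting an auxiliary point $\uy$ of norm about $Y=X_i^{\gamma}$ in the closest plane parallel to $\langle\ux_{i-1},\ux_i\rangle_\bR$, one sets $\ux'=q_n\uy+p_n\ux_{i-1}+m\ux_i$ using the continued-fraction convergents $(p_n,q_n)$ of a fixed badly approximable $\alpha$, choosing $n$ so that $q_{n-1}\le 2X'/Y<q_n$. This lets $X'$ be as large as one wishes while \emph{equidistributing} the small values $|\uz\cdot\uu'|$ over the intermediate planes, which is what makes the lower bound in (iv) survive in the huge gap between $X_i$ and $X_{i+1}$. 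Your sketch of ``spacer points of Dirichlet quality'' gestures in that direction but contains no mechanism that would control all intermediate integer points; the case distinction in Lemma~\ref{lemma3} of the paper (in particular Case~2, which hinges on Lemma~\ref{lemma:qpn-pqn} and hence on the badly approximable number) is precisely what you are missing. Until you supply a concrete recursion with this property, the argument does not go through.

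A secondary remark: your determinant argument for (iv) (``the dominant contribution is $\|\uy_i\wedge\uy_j\|\,|\ux\cdot\uu|$ while the other terms are $<1/2$'') needs a quantitative bound on how close $\uy_i\wedge\uy_j$ is to the $\uu$-direction, and in the regime where $\ux$ is very large compared with $\uy_i$ and $\uy_j$ this is delicate; the paper sidesteps it by working with $\uu_i=\ux_{i-1}\wedge\ux_i/\|\cdot\|$ and the explicit estimate 4) of Lemma~\ref{lemma2}. Your Baire-category route to (i) could in principle be made to work, but the paper obtains (i) for free from Lemma~\ref{lemma3}, so you would be adding complexity without need.
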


The following corollary shows that the conclusion of
Theorem B is best possible.

\begin{maincorollary}
Let $\psi\colon(1,\infty)\to(0,\infty)$ be an unbounded strictly
increasing function, let $\ud\in\bR^3\setminus\{0\}$ and let
$\delta>0$.  Then, there exists a unit vector $\uu\in\bR^3$
with $\bQ$-linearly independent coordinates such that
    \[
    |\ux\cdot\uu|
    \ge \frac{1}{\psi(\|\ux\|)\|\ux\|^{\gamma}}.
    \]
for any point $\ux\in\bZ^3$ of sufficiently large
norm with $\dist(\ux,\ud)>\delta$.
\end{maincorollary}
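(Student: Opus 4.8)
The plan is to obtain the Corollary as a quick formal consequence of Theorem C, the only preparation being the choice of an appropriate integer point $\ux_0$ and of an appropriate auxiliary function to play the role of $\psi$ in that theorem.

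Fix $\ud\in\bR^3\setminus\{0\}$ and $\delta>0$ as in the statement. Since the images in $\bP^2(\bR)$ of the non-zero points of $\bZ^3$ form a dense subset (rational directions are dense: normalize $\ud$, approximate its coordinates by rationals, and clear denominators), I would first choose a non-zero point $\ux_0\in\bZ^3$ with $\dist(\ux_0,\ud)<\delta/3$. I would then apply Theorem C to the data consisting of the function $\psi_0:=(\delta/3)\,\psi$ (which is again unbounded, strictly increasing, and positive on $(1,\infty)$), the point $\ux_0$, and the positive number $\delta/3$. This produces linearly independent unit vectors $\uu,\uv,\uw$ and positive constants $C,C'$ satisfying conditions (i)--(iv), and I claim that $\uu$ has the property required by the Corollary.

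By (i), $\uu$ is a unit vector whose coordinates are linearly independent over $\bQ$. Now let $\ux\in\bZ^3$ satisfy $\|\ux\|\ge C'$ and $\dist(\ux,\ud)>\delta$. Combining the triangle inequality for the projective distance with the bound $\dist(\ux_0,\uv)\le\delta/3$ from (ii) and the choice of $\ux_0$, I would estimate
\[
 \dist(\ux,\uv)\ \ge\ \dist(\ux,\ud)-\dist(\ud,\ux_0)-\dist(\ux_0,\uv)\ >\ \delta-\frac{\delta}{3}-\frac{\delta}{3}\ =\ \frac{\delta}{3},
\]
and the same estimate with $\uw$ in place of $\uv$, so that $\dist(\ux,\{\uv,\uw\})\ge\delta/3$. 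Substituting this into condition (iv) gives
\[
 |\ux\cdot\uu|\ \ge\ \frac{\dist(\ux,\{\uv,\uw\})}{\psi_0(\|\ux\|)\,\|\ux\|^{\gamma}}\ \ge\ \frac{\delta/3}{(\delta/3)\,\psi(\|\ux\|)\,\|\ux\|^{\gamma}}\ =\ \frac{1}{\psi(\|\ux\|)\,\|\ux\|^{\gamma}},
\]
which is exactly the asserted inequality, valid for every $\ux\in\bZ^3$ with $\dist(\ux,\ud)>\delta$ and $\|\ux\|\ge C'$.

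Since this is only a rescaling argument, I do not expect any genuine obstacle. The only points deserving attention are the verification that $\psi_0$ still satisfies the hypotheses of Theorem C and the bookkeeping in the triangle inequality, where the choice of the margin $\delta/3$ (equivalently, approximating $\ud$ by $\ux_0$ to within $\delta/3$ and invoking Theorem C with the smaller radius $\delta/3$) is precisely what guarantees that integer points far from $\ud$ stay uniformly far from both $\uv$ and $\uw$. All the difficulty is concentrated in Theorem C itself.
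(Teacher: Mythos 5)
Your proof is correct and takes essentially the same approach as the paper: choosing $\ux_0$ within $\delta/3$ of $\ud$, applying Theorem~C with $\delta/3$ and $(\delta/3)\psi$, and using the triangle inequality to ensure $\dist(\ux,\{\uv,\uw\})\ge\delta/3$ before invoking condition (iv). The only cosmetic difference is that you spell out the triangle inequality computation and the density-of-rational-directions justification for choosing $\ux_0$, both of which the paper leaves implicit.
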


To derive the corollary from Theorem C, we simply choose a
non-zero point $\ux_0\in\bZ^3$ such that $\dist(\ux_0,\ud)\le \delta/3$
and we apply the theorem with $\delta$ replaced by $\delta/3$
and $\psi$ replaced by $(\delta/3)\psi$.  Then, with respect
to the points $\uu$, $\uv$ and $\uw$ provided by the theorem, any
non-zero point $\ux$ of $\bZ^3$ with $\dist(\ux,\ud)\ge \delta$ has
$\dist(\ux,\,\{\uv,\uw\}) \ge \delta/3$ and so, if $\|\ux\|$ is
sufficiently large, we obtain
\[
 |\ux\cdot\uu|
    \ge \frac{(\delta/3)}{(\delta/3)\psi(\|\ux\|)\|\ux\|^{\gamma}}
     = \frac{1}{\psi(\|\ux\|)\|\ux\|^{\gamma}},
\]
as requested.

Schmidt's proof of Theorem B is short and clever.  It is useful to
go back to his argument to better understand the construction that
leads to Theorem C.  Here we simply give a short account of its
first steps.  Schmidt proceeds by contradiction, assuming that,
for some positive $\delta$ and $\epsilon$, any non-zero point
$\ux\in\bZ^3$ satisfying
\begin{equation}
\label{cond_Schmidt}
 |\ux\cdot\uu|\le \epsilon \|\ux\|^{-\gamma}
\end{equation}
has $\dist(\ux,\ud)>\delta$.  Then he constructs a sequence of
rectangular parallelepipeds centered at the origin.  Each of them
has faces perpendicular to $\uu$, $\ud$ and $\uu\wedge \ud$, and
is particularly elongated in the direction of the vector $\ud$.
It has volume $8$ in order to ensure, by Minkowski's first convex
body theorem, that it contains a non-zero integer point.  Finally,
its dimensions are chosen so that all of its points $\ux$ satisfy
the condition \eqref{cond_Schmidt}.  Then the non-zero integer
points that it contains are all located in the narrow portion
of the parallelepiped defined by $\dist(\ux,\ud)> \delta$.
From this, Schmidt deduces that, for each $X\ge 1$, there exists
a non-zero point $\ux\in\bZ^3$ with $\|\ux\|\le X$ and
$|\ux\cdot\uu|\le C X^{-\gamma-1}$, where $C>0$ is
independent of $X$. This, in turn, implies the existence of
a sequence of points $(\ux_i)_{i\ge 1}$ in $\bZ^3$ with increasing
norms $X_i:=\|\ux_i\|$, such that $|\ux_i\cdot\uu|\le
CX_{i+1}^{-\gamma-1}$ for each $i\ge 1$.  This second step is
analogous to the construction of the so-called \emph{minimal points}
in \cite{DS}.  The rest of the proof uses geometry of numbers to
derive a contradiction out of these data.

To prove Theorem C, we construct a sequence of points
$(\ux_i)_{i\ge 1}$ in $\bZ^3$ which satisfies the above property
for a suitable unit vector $\uu\in\bR^3$.  Moreover, for odd
indices $i$, the class of $\ux_i$ in $\bP^2(\bR)$ converges
to that of a unit vector $\uv$ while, for even $i$, it
converges to a different class belonging to a unit vector $\uw$.
In particular, the angle between $\ux_{i-1}$ and $\ux_i$
remains bounded away from $0$.  The construction of Moshchevitin
in \cite{Mo} also shares this property, and a similar
behavior shows up in the study of extremal numbers (see
\cite[\S 4]{R11}).  One difficulty is to make $X_{i+1}$
arbitrarily large compared to $X_i$.  We will see in the next
section how it can be resolved.

Before going into this, we mention that the introduction of the
vectors $\tuv$ and $\tuw$ in part (iii) of Theorem C is simply
meant to make this statement more symmetric.  The close relation
between $\min\{|\ux\cdot\tuv|, |\ux\cdot\tuw|\}$ and
$\dist(\ux\,,\{\uv,\uw\})$ is clarified by the following estimates.

\begin{lemma}
\label{lemma:vperp}
Let $\uu,\uv,\uw$ be unit vectors in $\bR^3$ with $\uu\cdot\uv
=\uu\cdot\uw=0$.  Put $\tuv=\uu\wedge\uv$ and $\tuw=\uu\wedge\uw$.
Then for any non-zero $\ux\in\bR^3$, we have
\[
 \min\{|\ux\cdot\tuv|, |\ux\cdot\tuw|\}
 \le \|\ux\|\,\dist(\ux\,,\{\uv,\uw\})
 \le |\ux\cdot\uu| + \min\{|\ux\cdot\tuv|, |\ux\cdot\tuw|\}
\]
\end{lemma}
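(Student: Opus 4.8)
The plan is to work with an orthonormal-type frame adapted to $\uu$. Since $\uu$, $\uv$, $\tuv=\uu\wedge\uv$ are mutually orthogonal unit vectors (because $\uu\perp\uv$ and both are unit, so $\|\tuv\|=1$), and likewise $\uu$, $\uw$, $\tuw$ form an orthonormal basis, I can decompose any $\ux\in\bR^3$ in each frame:
\[
 \ux = (\ux\cdot\uu)\uu + (\ux\cdot\uv)\uv + (\ux\cdot\tuv)\tuv
     = (\ux\cdot\uu)\uu + (\ux\cdot\uw)\uw + (\ux\cdot\tuw)\tuw .
\]
The key observation is that $\dist(\ux,\uv)$ measures how far $\ux$ points away from the line $\bR\uv$, and in the orthonormal frame $(\uu,\uv,\tuv)$ this is controlled by the two coordinates orthogonal to $\uv$, namely $\ux\cdot\uu$ and $\ux\cdot\tuv$. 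Precisely, $\|\ux\wedge\uv\|^2 = (\ux\cdot\uu)^2 + (\ux\cdot\tuv)^2$ since $\uv$ has unit length, hence
\[
 \|\ux\|\,\dist(\ux,\uv) = \|\ux\wedge\uv\| = \sqrt{(\ux\cdot\uu)^2+(\ux\cdot\tuv)^2},
\]
and similarly $\|\ux\|\,\dist(\ux,\uw) = \sqrt{(\ux\cdot\uu)^2+(\ux\cdot\tuw)^2}$.

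For the left inequality: from the displayed identity, $\|\ux\|\,\dist(\ux,\uv)\ge |\ux\cdot\tuv|$ and $\|\ux\|\,\dist(\ux,\uw)\ge |\ux\cdot\tuw|$, so taking the minimum over the two vectors $\uv,\uw$ on the left forces $\|\ux\|\,\dist(\ux,\{\uv,\uw\}) \ge \min\{|\ux\cdot\tuv|,|\ux\cdot\tuw|\}$. For the right inequality: using $\sqrt{a^2+b^2}\le |a|+|b|$ gives $\|\ux\|\,\dist(\ux,\uv) \le |\ux\cdot\uu| + |\ux\cdot\tuv|$ and the same with $\tuw$; taking the minimum of the two right-hand sides and noting $|\ux\cdot\uu|$ is common to both, we get $\|\ux\|\,\dist(\ux,\{\uv,\uw\}) = \min\{\|\ux\|\dist(\ux,\uv),\|\ux\|\dist(\ux,\uw)\} \le |\ux\cdot\uu| + \min\{|\ux\cdot\tuv|,|\ux\cdot\tuw|\}$.

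I do not anticipate a genuine obstacle here; the statement is essentially a packaging of the Pythagorean identity in the adapted frame together with the elementary bounds $\max\{|a|,|b|\}\le\sqrt{a^2+b^2}\le|a|+|b|$. The only point requiring a line of care is the verification that $\|\uu\wedge\uv\|=1$ and that $(\uu,\uv,\tuv)$ is orthonormal, which follows from $\uu\cdot\uv=0$ and $\|\uu\|=\|\uv\|=1$; then $(\ux\cdot\uu)^2+(\ux\cdot\uv)^2+(\ux\cdot\tuv)^2=\|\ux\|^2$ and subtracting $(\ux\cdot\uv)^2$ from both sides yields the norm of the cross product. One should also remember to invoke that $\dist(\ux,\{\uv,\uw\})=\min\{\dist(\ux,\uv),\dist(\ux,\uw)\}$ since the set has only two elements, which is immediate from the definition of $\dist(\ux,S)$ as an infimum.
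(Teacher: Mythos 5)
Your proof is correct, and it arrives at the same key identity $\|\ux\wedge\uv\|^2 = (\ux\cdot\uu)^2 + (\ux\cdot\tuv)^2$ as the paper, only deriving it via the orthonormal decomposition of $\ux$ in the frame $(\uu,\uv,\tuv)$ rather than via the paper's cross-product and determinant manipulations; the remaining elementary steps are identical.
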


\begin{proof}  We find that
\[
 |\ux\cdot\tuv|^2
  = |\det(\ux,\uu,\uv)|^2
  = |(\ux\wedge\uv)\cdot\uu|^2
  = \|\ux\wedge\uv\|^2 - \|(\ux\wedge\uv)\wedge\uu\|^2
  = \|\ux\wedge\uv\|^2 - |\ux\cdot\uu|^2
\]
since $(\ux\wedge\uv)\wedge\uu = (\ux\cdot\uu)\uv-(\uv\cdot\uu)\ux
= (\ux\cdot\uu)\uv$.  As $\|\ux\wedge\uv\| = \|\ux\|\,\dist(\ux,\uv)$,
this gives
\[
 |\ux\cdot\tuv|
 \le \|\ux\|\,\dist(\ux,\uv)
 \le |\ux\cdot\uu| + |\ux\cdot\tuv|.
\]
By symmetry, the same inequality holds with $\uv$ replaced
by $\uw$ and $\tuv$ replaced by $\tuw$.  The conclusion follows.
\end{proof}

\section{The recursive step}
\label{sec:ind-step}

Our construction is based on the choice of a badly approximable
number, by which we mean a real number $\alpha$ which, for an
appropriate constant $C_1>1$, satisfies
\begin{equation}
 \label{qalpha}
  |q\alpha-p|\ge \frac{1}{C_1|q|}
\end{equation}
for each $p,q\in\bZ$ with $q\neq 0$.  We choose such a
number $\alpha$ in the interval $(0,1/2)$.  Then, the
theory of continued fractions provides sequences of integers
$(p_n)_{n\ge 1}$, $(q_n)_{n\ge 1}$ such that, for each
$n\ge 1$, we have
\begin{align}
\label{cf1} &0\le p_n\le q_n, \quad p_1=0, \quad q_1=1,\\
\label{cf2} &q_np_{n+1}-p_nq_{n+1}=(-1)^{n+1},\\
\label{cf3} &|q_n\alpha-p_n|<q_{n+1}^{-1},\\
\label{cf4} &q_n<q_{n+1}\le C_1q_n
\end{align}
(see \cite[Chapter I]{Sc80}).  The inequality $q_{n+1}\le
C_1 q_n$ follows by applying the hypothesis \eqref{qalpha}
to the left hand side of \eqref{cf3}.  For our purpose, we will
simply need \eqref{cf1}, \eqref{cf2}, \eqref{cf4} and the
following additional consequence of \eqref{qalpha} and \eqref{cf3}.

\begin{lemma}
\label{lemma:qpn-pqn}
We have $\disp |qp_n-pq_n| \ge \frac{q_n}{2C_1|q|}$
for any $p,q\in\bZ$ with $\disp 1\le |q| < q_n$.
\end{lemma}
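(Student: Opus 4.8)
The plan is to derive this inequality from the badly-approximable hypothesis~\eqref{qalpha} together with~\eqref{cf3}, by rewriting $qp_n-pq_n$ so that the quantity $q_n\alpha-p_n$ appears. First I would multiply and divide by something harmless: since $q_n\ne 0$, one has the exact identity
\[
 q_n(qp_n-pq_n) = q_n q (p_n - q_n\alpha) + q_n q_n(q\alpha - p) = q q_n(p_n-q_n\alpha) + q_n^2(q\alpha-p),
\]
so that, taking absolute values and using the triangle inequality in the form $|A|\ge |B|-|C|$ with $B=q_n^2(q\alpha-p)$ and $C=qq_n(q_n\alpha-p_n)$,
\[
 q_n\,|qp_n-pq_n| \ge q_n^2\,|q\alpha-p| - |q|\,q_n\,|q_n\alpha-p_n|.
\]
(One must check that $q\alpha-p\ne 0$, which holds because $\alpha$ is irrational by~\eqref{qalpha}, so the first term is genuinely positive and the bound in~\eqref{qalpha} applies.)

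Next I would insert the two standard estimates. By~\eqref{qalpha}, $|q\alpha-p|\ge 1/(C_1|q|)$, so $q_n^2|q\alpha-p|\ge q_n^2/(C_1|q|)$. By~\eqref{cf3}, $|q_n\alpha-p_n|<1/q_{n+1}$, and since $q_{n+1}>q_n$ this gives $|q|\,q_n\,|q_n\alpha-p_n| < |q|\,q_n/q_{n+1} < |q|\,q_n/q_n = |q|$; in fact I only need $|q|\,q_n\,|q_n\alpha-p_n| < |q|\,q_n/q_{n+1}$ and then use $|q|<q_n$ so this is $< q_n^2/q_{n+1} < q_n$. Substituting,
\[
 q_n\,|qp_n-pq_n| \ge \frac{q_n^2}{C_1|q|} - q_n \cdot \frac{q_n}{q_{n+1}}\cdot\frac{|q|}{q_n}\cdot\frac{q_{n+1}}{q_n}\cdots
\]
is the kind of bookkeeping to be done carefully; the clean way is: the negative term is at most $|q|\,q_n/q_{n+1}$, and since $|q|\le q_n-1<q_n<q_{n+1}$ we get the negative term $<q_n$, while I want to compare with $q_n^2/(C_1|q|)$. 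Dividing the whole inequality by $q_n$, it remains to show $q_n/(C_1|q|) - q_n/q_{n+1} \ge q_n/(2C_1|q|)$, i.e. $q_n/q_{n+1}\le q_n/(2C_1|q|)$, i.e. $2C_1|q|\le q_{n+1}$. This last inequality follows from $|q|<q_n$ and~\eqref{cf4} in the form $q_{n+1}\le C_1 q_n$... which points the wrong way.

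The main obstacle, then, is getting the constants to line up: the crude split above loses too much. The fix I would use is to be less wasteful with~\eqref{cf3}: keep $|q_n\alpha-p_n|<1/q_{n+1}$ and also keep the factor $q_{n+1}>q_n$ to bound the negative term by $|q|q_n/q_{n+1}<|q|q_n/q_n=|q|$, then divide by $q_n$ to need $1/(C_1|q|)-1/q_{n+1}\ge 1/(2C_1|q|)$ — no, cleaner still: bound $|q_n\alpha-p_n|<1/q_{n+1}<1/q_n$ so the negative term is $<|q|q_n/q_n=|q|<q_n$, hence
\[
 q_n|qp_n-pq_n|\ \ge\ \frac{q_n^2}{C_1|q|}-|q|\ \ge\ \frac{q_n^2}{C_1|q|}-q_n\ \ge\ \frac{q_n^2}{C_1|q|}-\frac{q_n^2}{C_1|q|}\cdot\frac{C_1|q|}{q_n},
\]
and since $C_1|q|/q_n$ could exceed $1/2$ this still does not immediately close. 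The genuinely correct route, which I would write up, is: bound the negative term using BOTH $|q_n\alpha-p_n|<1/q_{n+1}$ and $|q|<q_n$, getting $|q|q_n/q_{n+1}$, and bound this against HALF the main term, i.e. prove $|q|q_n/q_{n+1}\le q_n^2/(2C_1|q|)$, equivalently $2C_1|q|^2\le q_n q_{n+1}$; since $|q|\le q_n-1$ and $q_{n+1}\ge q_n+1$ this needs $2C_1(q_n-1)^2\le q_n(q_n+1)$, which is false for large $C_1$ — so the statement as bounded must instead use a sharper lower bound for $q_{n+1}$ relative to $|q|$, namely: because $1\le|q|<q_n$, the best rational approximation property gives $|q\alpha-p|\ge|q_{m}\alpha-p_{m}|$ for the largest $m$ with $q_m\le|q|$, hence $\ge 1/q_{m+1}\ge 1/q_n$; rerunning the estimate with $|q\alpha-p|\ge 1/q_n$ in place of $1/(C_1|q|)$ would change the target. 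I expect the paper's intended argument is exactly the first computation above with the observation that the error term $|q|\,q_n\,|q_n\alpha-p_n|< |q|\,q_n/q_{n+1}\le q_n^2/q_{n+1}$, combined with $q_{n+1}\le C_1q_n$ giving $q_n^2/q_{n+1}\ge q_n/C_1$ — wrong direction again — so the honest main obstacle is locating which of~\eqref{qalpha}--\eqref{cf4} gives the factor $2$, and I would resolve it by writing $q_n|qp_n-pq_n|\ge q_n^2|q\alpha-p|-|q|q_n|q_n\alpha-p_n|$ and then using~\eqref{qalpha} on the first term and, on the second, $|q_n\alpha-p_n|<1/q_{n+1}<1/(2|q|)$ whenever $q_{n+1}>2|q|$ — which holds since $q_{n+1}>q_n>|q|$ only gives $>|q|$, so one instead notes $q_{n+1}\ge q_n> |q|$ forces the integer inequality $q_{n+1}\ge 2|q|$ is NOT automatic, hence the cleanest valid proof multiplies through differently. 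In the write-up I would therefore start from $q\,p_n-p\,q_n$ directly (not times $q_n$), use $p_n\le q_n$ and the two approximation bounds, and track constants until the factor $1/(2C_1)$ emerges; that constant-chasing is the only real content.
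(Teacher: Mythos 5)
Your decomposition
\[
 |qp_n-pq_n|\ \ge\ q_n\,|q\alpha-p| - |q|\,|q_n\alpha-p_n|
\]
followed by \eqref{qalpha} and \eqref{cf3} is exactly the right starting point, and you correctly diagnose the obstruction: the resulting lower bound is
\[
 \frac{q_n}{C_1|q|} - \frac{|q|}{q_{n+1}},
\]
and to absorb the negative term into half the positive term one needs roughly $2C_1|q|^2 \le q_n q_{n+1}$, which fails when $|q|$ is close to $q_n$. Where the attempt goes wrong is in the response to this obstruction: you conclude that the decomposition must be abandoned or that a sharper input than \eqref{qalpha} is needed, and the write-up never reaches a valid proof.

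The missing idea is a \emph{case split on the size of $|q|$ relative to $q_n$}. When $|q|\le q_n/\sqrt{2C_1}$, the computation you set up does close: $2C_1|q|^2\le q_n^2< q_nq_{n+1}$, so $|q|/q_{n+1}\le q_n/(2C_1|q|)$, and the decomposition gives $|qp_n-pq_n|\ge q_n/(C_1|q|)-|q|/q_{n+1}\ge q_n/(2C_1|q|)$. When instead $|q|> q_n/\sqrt{2C_1}$, the target bound $q_n/(2C_1|q|)$ is less than $1/\sqrt{2C_1}<1$, so no analytic estimate is needed at all: one simply observes that $qp_n-pq_n$ is a \emph{nonzero} integer. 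Nonvanishing holds because $\gcd(p_n,q_n)=1$ by \eqref{cf2}, so $qp_n=pq_n$ would force $q_n\mid q$, contradicting $1\le|q|<q_n$. Thus $|qp_n-pq_n|\ge 1>q_n/(2C_1|q|)$. You had all the ingredients — including the crucial divisibility observation implicitly, since you note $q\ne 0$ and primitivity of $(p_n,q_n)$ elsewhere — but without the dichotomy the constant-chasing cannot be made to work uniformly in $q$, which is precisely what you ran into.
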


\begin{proof} If $|q| \le q_n/\sqrt{2C_1}$, we find that
\[
 |qp_n-pq_n|
  \ge q_n|q\alpha-p|-|q||q_n\alpha-p_n|
  \ge \frac{q_n}{C_1|q|}-\frac{|q|}{q_{n+1}}
  \ge \frac{q_n}{2C_1|q|}\,.
\]
Otherwise, we have $|qp_n-pq_n|\ge 1 \ge q_n/(2C_1|q|)$
because $qp_n-pq_n$ is a non-zero integer.  This last
assertion follows from the hypothesis $1\le |q|<q_n$ together
with the fact that $p_n$ and $q_n$ are relatively prime
because of \eqref{cf2}.
\end{proof}

We say that a point $\ux$ of $\bZ^3$ is \emph{primitive}
if it is non-zero and has relatively prime coordinates.
We say that a pair $(\ux,\uy)$ of points of $\bZ^3$ is
\emph{primitive} if it satisfies the following equivalent
conditions:
\begin{itemize}
 \item[(i)] $\ux\wedge\uy$ is a primitive point of $\bZ^3$,
 \item[(ii)] $(\ux,\uy)$ is a basis of
    $\bZ^3 \cap \langle\ux,\uy\rangle_\bR$,
 \item[(iii)] there exists $\uz\in\bZ^3$ such that
     $(\ux,\uy,\uz)$ is a basis of $\bZ^3$.
\end{itemize}
With this notation at hand, the next lemma provides the
recursive step in our construction.

\begin{lemma}
 \label{lemma1}
Let $(\ux^*,\ux)$ be a primitive pair of points of $\bZ^3$,
and let $Y,X'\in\bR$ and $n\in\bZ$ with $n\ge 2$ be such that
\begin{equation}
 \label{lemma1:eq1}
 2(\|\ux^*\|+\|\ux\|) \le Y\le X'
 \et
 q_{n-1} \le \frac{2X'}{Y} < q_n\,.
\end{equation}
Then there exist $\uy,\ux'\in\bZ^3$ with the following properties:
\begin{itemize}
 \item[1)] $(\ux^*,\ux,\uy)$ is a basis of $\bZ^3$ and $(\ux,\ux')$ is a
 primitive pair of $\bZ^3$ with
 \[
  \det(\ux^*,\ux,\uy)=1,\quad
  \det(\ux^*,\ux,\ux')=q_n,\quad
  \det(\uy,\ux,\ux')=-p_n\,;
 \]
 \item[2)] $Y\le \|\uy\| \le 2Y$ and $X'\le \|\ux'\| \le 5C_1X'$\,;
 \\
 \item[3)] $\disp \dist(\ux^*,\ux')
      \le \frac{\|\ux\|}{2X'} + \frac{2C_1}{Y\|\ux^*\|\, \|\ux\| \dist(\ux^*,\ux)}$\,;
 \\
 \item[4)] the unit vector $\uu$ perpendicular to $\langle \ux^*, \ux \rangle_\bR$
   and the unit vector $\uu'$  perpendicular to $\langle \ux, \ux' \rangle_\bR$
   satisfy
 \[
   \dist(\uu,\uu')
   \le \frac{2C_1}{Y\|\ux^*\|\,\|\ux\|\dist(\ux^*,\ux)\dist(\ux,\ux')}\,.
 \]
\end{itemize}
\end{lemma}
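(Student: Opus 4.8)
The plan is to extend $(\ux^*,\ux)$ to a unimodular basis, describe all admissible pairs $(\uy,\ux')$ by three free integers, and then tune those integers to secure 1)--4). Since $(\ux^*,\ux)$ is primitive, condition (iii) of the definition above furnishes $\uz\in\bZ^3$ with $(\ux^*,\ux,\uz)$ a basis of $\bZ^3$, and after replacing $\uz$ by $-\uz$ if needed we may assume $\det(\ux^*,\ux,\uz)=1$. Then the $\uy\in\bZ^3$ with $\det(\ux^*,\ux,\uy)=1$ are precisely the $\uy=\uz+a\ux^*+b\ux$, $a,b\in\bZ$ (by condition (ii), $\uy-\uz\in\bZ\ux^*+\bZ\ux$), and expanding a general $\ux'\in\bZ^3$ in the basis $(\ux^*,\ux,\uy)$ shows that the $\ux'$ with $\det(\ux^*,\ux,\ux')=q_n$ and $\det(\uy,\ux,\ux')=-p_n$ are precisely the $\ux'=q_n\uy+p_n\ux^*+d\ux$, $d\in\bZ$. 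For every such choice the equalities in 1) hold by construction, and $(\ux,\ux')$ is automatically primitive: any integer dividing all coordinates of $\ux\wedge\ux'$ divides both $\ux^*\cdot(\ux\wedge\ux')=\det(\ux^*,\ux,\ux')=q_n$ and $\uy\cdot(\ux\wedge\ux')=\det(\uy,\ux,\ux')=-p_n$, hence divides $\gcd(p_n,q_n)=1$ by \eqref{cf2}.

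Next I would decompose vectors relative to $\Pi=\langle\ux^*,\ux\rangle_\bR$ and, inside $\Pi$, relative to $\ux^*$: writing $\uy=s\ux^*+t\ux+\uy^\perp$ with $\uy^\perp\perp\Pi$ one has $\|\uy^\perp\|=1/\|\ux^*\wedge\ux\|$ and $\ux'=(p_n+q_ns)\ux^*+(q_nt+d)\ux+q_n\uy^\perp$. Two short cross-product computations then give
\[
 \dist(\uu,\uu')=\frac{q_n\|\ux\|}{\|\ux^*\wedge\ux\|\,\|\ux\wedge\ux'\|},
\]
using $(\ux^*\wedge\ux)\wedge(\ux\wedge\ux')=\det(\ux^*,\ux,\ux')\,\ux=q_n\ux$, and, by splitting the part of $\ux'$ orthogonal to $\ux^*$ into its $\Pi$-component ($(q_nt+d)$ times the part of $\ux$ orthogonal to $\ux^*$) and its $\Pi^\perp$-component $q_n\uy^\perp$,
\[
 \dist(\ux^*,\ux')\ \le\ \frac{|q_nt+d|\,\|\ux\|\,\dist(\ux^*,\ux)}{\|\ux'\|}\ +\ \frac{q_n}{\|\ux^*\wedge\ux\|\,\|\ux'\|}.
\]
After rewriting $\|\ux^*\wedge\ux\|=\|\ux^*\|\,\|\ux\|\,\dist(\ux^*,\ux)$ and $\|\ux\wedge\ux'\|=\|\ux\|\,\|\ux'\|\,\dist(\ux,\ux')$, these two displays reduce 3) and 4) to the three conditions $\|\ux'\|\ge X'$, $q_nY\le 2C_1\|\ux'\|$, and $|q_nt+d|\le\tfrac12$.

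It remains to choose $a,b,d$. First pick $b$ so that $|t|\le\tfrac12$, then the smallest $a$ with $s\,\|\ux^*\|\ge Y+\tfrac12\|\ux\|$; since $\|\ux^*\|+\|\ux\|\le Y/2$, minimality forces $Y+\tfrac12\|\ux\|\le s\|\ux^*\|<\tfrac32Y$, and combining this with $|t|\le\tfrac12$ and $\|\uy^\perp\|\le1$ yields $Y\le\|\uy\|\le2Y$. Finally pick $d$ so that $|q_nt+d|\le\tfrac12$. By \eqref{cf4} and the hypothesis \eqref{lemma1:eq1}, $2X'<q_nY\le2C_1X'$; hence the main summand $(p_n+q_ns)\ux^*$ of $\ux'$ has norm strictly between $2X'$ and $4C_1X'$ (using $0\le p_n\le q_n$ and $s\|\ux^*\|<\tfrac32Y$), while the corrections $(q_nt+d)\ux$ and $q_n\uy^\perp$ have norms at most $\tfrac12\|\ux\|\le X'/4$ and $q_n\le C_1X'/2$, so a routine estimate gives $X'\le\|\ux'\|\le5C_1X'$, hence in particular $q_nY\le2C_1X'\le2C_1\|\ux'\|$. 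Together with $|q_nt+d|\le\tfrac12$, this yields all of 1)--4).

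The step I expect to be the real obstacle is exactly this final coordination: the single integer $d$ must both place $\|\ux'\|$ inside the window $[X',5C_1X']$ and keep the error term $|q_nt+d|$ of bound 3) small. The resolution is to spend the otherwise harmless parameter $b$ to make $t$ small first, so that the size of $\ux'$ is in fact controlled by $s$ (hence by $a$), leaving $d$ free to kill $|q_nt+d|$; and it is precisely here that one needs both that $\ux^*,\ux$ are short relative to $Y$ and that $2X'/Y$ is squeezed between the consecutive continued-fraction denominators $q_{n-1}$ and $q_n$.
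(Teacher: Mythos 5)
Your construction is essentially the paper's: extend $(\ux^*,\ux)$ to a unimodular basis, slide the third basis vector by multiples of $\ux$ (to shrink its $\ux$-coefficient) and then by multiples of $\ux^*$ (to land $\|\uy\|$ in $[Y,2Y]$), form $\ux'=q_n\uy+p_n\ux^*+d\ux$, and tune $d$ so that $|q_nt+d|\le\tfrac12$. The primitivity check, the identity $(\ux^*\wedge\ux)\wedge(\ux\wedge\ux')=q_n\ux$, and the reduction of 3) and 4) to $\|\ux'\|\ge X'$, $q_nY\le 2C_1\|\ux'\|$, $|q_nt+d|\le\tfrac12$ all match the paper.

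One soft spot: your lower bound $\|\ux'\|\ge X'$ cannot come from a naive triangle inequality against the main summand $(p_n+q_ns)\ux^*$. With your bounds the corrections are at most $X'/4$ and $q_n\|\uy^\perp\|\le q_n\le C_1X'/2$, so the triangle inequality only yields $\|\ux'\|>(7/4-C_1/2)X'$, which is below $X'$ as soon as $C_1>3/2$; similarly, the bound $\|\uy\|\ge Y$ fails by $1$ if done this way. You must invoke the orthogonality $\uy^\perp\perp\Pi$ you set up, so that $\|\uy\|\ge\|s\ux^*+t\ux\|$ and $\|\ux'\|\ge\|(p_n+q_ns)\ux^*+(q_nt+d)\ux\|$, after which the triangle inequality inside $\Pi$ suffices. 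The paper sidesteps this entirely by comparing $\ux'$ to $q_n\uy$ (whose norm is already pinned to $[q_nY,2q_nY]$) and showing $\|\ux'-q_n\uy\|\le\tfrac12 q_nY$; either route works, but the phrase \emph{a routine estimate} should be replaced by one of these two explicit arguments.
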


Note that, in 4), the number $\dist(\uu,\uu')$ is independent of the choice
of $\uu$ and $\uu'$.

\begin{proof}
Put $H=\|\ux^*\wedge\ux\|$ and $\uu=H^{-1}\ux^*\wedge\ux$, so that $\uu$ is a unit
vector orthogonal to $\langle \ux^*, \ux \rangle_\bR$.  By hypothesis, there
exists $\uy_0\in\bZ^3$ such that $(\ux^*,\ux,\uy_0)$ is a basis of $\bZ^3$.
Upon writing
\[
 \uy_0=r\ux^*+s\ux+t\uu
\]
with $r,s,t\in\bR$, we find that
\[
 \pm 1 = \det(\ux^*,\ux,\uy_0) = t\det(\ux^*,\ux,\uu) = tH
\]
and so $t = \pm H^{-1}$.  Replacing $\uy_0$ by $-\uy_0$ if necessary,
we may assume that $t=H^{-1}$.  Then, replacing $\uy_0$ by $\uy_0+\ell\ux$
for a suitable $\ell\in\bZ$, we may further assume that $|s|\le 1/2$.  We define
\[
 \uy = \uy_0+a\ux^* = (a+r)\ux^*+s\ux+H^{-1}\uu \in\bZ^3
\]
where $a$ is the smallest integer for which
\[
 (a+r)\|\ux^*\| \ge Y + \frac{\|\ux\|}{2} + 1.
\]
Then, we have $\det(\ux^*,\ux,\uy)=1$ and
\[
 Y \le \|\uy\| \le Y+\|\ux^*\|+\|\ux\|+2 \le 2Y
\]
because $\|\uy-(a+r)\ux^*\| \le |s|\,\|\ux\|+H^{-1} \le \|\ux\|/2 + 1$.

We choose an integer $m$ with $|sq_n+m|\le 1/2$ and form the point
\[
 \ux' = q_n\uy + p_n\ux^* + m\ux \in \bZ^3.
\]
Since $\det(\ux^*,\ux,\uy)=1$, we find that
\[
 \det(\ux^*,\ux,\ux')=q_n \et \det(\uy,\ux,\ux')=-p_n\,.
\]
Since $\gcd(p_n,q_n)=1$, this implies that $(\ux,\ux')$ is a primitive
pair of points in $\bZ^3$.  We further note that
\begin{align*}
 \|\ux'-q_n\uy\|
 &\le p_n\|\ux^*\| + |m|\,\|\ux\| \\
 &\le q_n\|\ux^*\| + \frac{1}{2}(q_n+1)\|\ux\| \\
 &\le q_n(\|\ux^*\|+\|\ux\|) \\
 &\le \frac{1}{2}q_nY,
\end{align*}
thus $(1/2)q_nY \le \|\ux'\|\le (5/2)q_nY$.  Since
\[
 \frac{2X'}{Y} \le q_n\le C_1q_{n-1} \le \frac{2C_1X'}{Y},
\]
we conclude that $X'\le \|\ux'\| \le 5C_1X'$.  This proves 1) and 2).

To prove 3), we observe that
\begin{align*}
 \|\ux^*\wedge\ux'\|
 &= \|\ux^*\wedge(q_n\uy+m\ux)\| \\
 &= \|\ux^*\wedge((sq_n+m)\ux+q_nH^{-1}\uu)\| \\
 &\le |sq_n+m|\,\|\ux^*\wedge\ux\|+q_nH^{-1}\|\ux^*\wedge\uu\| \\
 &\le \frac{1}{2}\|\ux^*\|\,\|\ux\| + \frac{2C_1X'}{HY}\|\ux^*\|,
\end{align*}
and so
\[
 \dist(\ux^*,\ux')
 =\frac{\|\ux^*\wedge\ux'\|}{\|\ux^*\|\,\|\ux'\|}
 \le \frac{\|\ux^*\wedge\ux'\|}{\|\ux^*\|\,X'}
 \le \frac{\|\ux\|}{2X'} + \frac{2C_1}{HY}\,.
\]
Then, 3) follows because $H=\|\ux^*\|\,\|\ux\|\dist(\ux^*,\ux)$.

Put $H'=\|\ux\wedge\ux'\|$ and $\uu'=(H')^{-1}\ux\wedge\ux'$,
so that $\uu'$ is a unit vector orthogonal to
$\langle \ux, \ux' \rangle_\bR$.  We have
\[
 (\ux^*\wedge\ux)\wedge(\ux\wedge\ux')
 = \det(\ux^*,\ux,\ux') \ux = q_n\ux
\]
thus
\[
 \dist(\uu,\uu')
 =\frac{q_n\|\ux\|}{HH'}
 \le \frac{2C_1X'\|\ux\|}{YHH'}
 \le \frac{2C_1\|\ux\|\,\|\ux'\|}{YHH'}\,.
\]
This is equivalent to the estimate of 4) by definition of
$\dist(\ux^*,\ux)$ and $\dist(\ux,\ux')$.
\end{proof}

The picture below illustrates the construction of the lemma.
It shows the projection of $\bZ^3$ on the plane perpendicular
to $\ux$, with the vector $\uu$ on the horizontal line
passing through the origin.  Each dot is thus the projection
of a translate of $\bZ\ux$, and the vertical line passing
through the origin represents the projection of the subspace
$\pi = \langle \ux^*, \ux \rangle_\bR$.  The vertical line
to its left is the projection of a closest plane $\pi_1$
containing an integral point, and its distance to $\pi$ is
$H^{-1}$.

\begin{figure}[ht]
  \centering
  \begin{tikzpicture}
    \clip (-7.5,-1.5) rectangle (4.5cm,6.5cm); 
    \draw [thin, dashed] (-7.5,0) -- (5,0); 
    \foreach \x in {-7,-6,...,7}{
      \foreach \y in {-7,-6,...,7}{
        \node[draw,circle,inner sep=1pt,fill] at (1.8*\x,-0.2*\x+\y) {};
    }}
    \foreach \x in {-4,-1,0}{
      \draw[thin] (1.8*\x,-5) -- (1.8*\x,10);
    }
    \coordinate (direction) at (-1.8*4,0.2*4+5); 
    \coordinate (perp) at (0.2*4+5,1.8*4); 
    \draw [thin] ($1.1*(direction)$) -- ($-0.5*(direction)$);
    \draw node at ($(direction)$) [above right] {$\ux'$};
    \node[draw,circle,inner sep=2pt] at (-1.8*4,0.2*4+5) {};
    \node[draw,circle,inner sep=2pt] at (-1.8*3,0.2*3+4) {};
    \node[draw,circle,inner sep=2pt] at (-1.8*2,0.2*2+2) {};
    \node[draw,circle,inner sep=2pt] at (-1.8*2,0.2*2+3) {};
    \node[draw,circle,inner sep=2pt] at (-1.8*1,0.2*1+1) {};
    \node[draw,circle,inner sep=2pt] at (-1.8*0,0.2*0+0) {};
    \node[draw,circle,inner sep=2pt] at (1.8*1,-0.2*1-1) {};
    \draw [thick,-latex] (0,0)
        -- (0,1) node [above right] {$\ux^*$};
    \draw [thick,-latex] (0,0)
        -- (3,0) node [below right] {$\uu$};
    \draw node at (0,0) [below left] {$\ux$};
    \draw node at (-1.8,1.2) [below left] {$\uy$};
    \draw [thick,-latex] (0,0)
        -- ($3/9.24*(perp)$) node [right] {$\uu'$};
    \draw (0.6,0) arc (0:51:0.6);
    \draw node at (0.55,0.65) [below right] {$\alpha$}; 
    \draw[<->,semithick] (-1.8,5.5)--(0,5.5);
    \draw node at (-0.9,5.5) [below] {$H^{-1}$};
    \draw node at (0,5.5) [right] {$\pi$};
    \draw node at (-1.8,5.5) [left] {$\pi_1$};
  \end{tikzpicture}
  \label{figure}
\end{figure}

The vector $\uu'$ is obtained by rotating $\uu$ by a small
angle $\alpha$ about the line $\bR\ux$.  Then, in each plane
parallel to $\pi$, the integer points $\uz$ for which
$|\uz\cdot\uu'|$ is minimal form at most two translates
of $\bZ\ux$.  They are shown on the picture as circled dots.
The exact angle $\alpha$ is obtained through a process which
is similar to that of fine tuning the focus of a microscope.
A first coarse adjustment is to choose $\alpha$ so that,
on $\pi_1$, the minimal value for $|\uz\cdot\uu'|$ is
obtained at $\uz=\uy$ or at $\uz=\uy+\ux^*$ where $\uy$
is an integer point of $\pi_1$ of norm about $Y$ which is
essentially closest to the line $\bR\ux^*$.  The finer
adjustment consists in choosing $\alpha$ so that the plane
perpendicular to $\uu'$ contains a non-zero integer point
$\ux'$ of arbitrarily large norm (about $X'$) also pointing
in a direction close to that of $\ux^*$.  However, the
most important feature of this correction, which is fundamental
for the proof of Lemma \ref{lemma3} below, is that the minimal
values for $|\uz\cdot\uu'|$ are essentially equidistributed
as we move along the relevant planes parallel to $\pi$,
from the one containing $\ux'$ to the one containing $-\ux'$.

\section{The main construction}

We now apply Lemma \ref{lemma1} to produce sequences of
points of the sort that we need for the proof of Theorem C.

\begin{lemma}
\label{lemma2}
Let $(\ux_0,\ux_1)$ be a primitive pair of points in $\bZ^3$, and
let $(X_i)_{i\ge 0}$ be a  sequence of positive real numbers
satisfying $X_0=\|\ux_0\|$, $X_1=\|\ux_1\|$ and
\begin{equation}
 \label{lemma2:eq1}
 12C_1X_i \le X_i^\gamma \le X_{i+1},
 \quad
 2X_{i+1}^2 \le X_iX_{i+2}
 \quad\text{for each $i\ge 1$.}
\end{equation}
Suppose further that $5X_0\le X_1$ and that
\begin{equation}
 \label{lemma2:eq2}
 \frac{5C_1X_1}{X_2} + \frac{4C_1}{\delta_0X_0X_1^{\gamma+1}}
 \le \delta_0
 \quad\text{where}
 \quad
 \delta_0=\frac{1}{2}\dist(\ux_0,\ux_1).
\end{equation}
Then there exist linearly independent unit vectors $\uu$, $\uv$, $\uw$
in $\bR^3$ and sequences $(\ux_i)_{i\ge 2}$,  $(\uy_i)_{i\ge 1}$ in
$\bZ^3$ which, for each $i\ge 1$, satisfy the following properties:
\begin{itemize}
 \item[1)] letting $n\ge 2$ denote the integer for which $q_{n-1}\le
 2X_{i+1}/X_i^{\gamma} < q_n$, we have
 \[
  \det(\ux_{i-1},\ux_i,\uy_i)=1, \quad
  \det(\ux_{i-1},\ux_i,\ux_{i+1})=q_n, \quad
  \det(\uy_i,\ux_i,\ux_{i+1})=-p_n\,;
 \]
 \item[2)] $X_i^\gamma\le \|\uy_i\| \le 2X_i^\gamma$ and
 $X_{i+1}\le \|\ux_{i+1}\| \le 5C_1X_{i+1}$\,;
 \smallskip
 \item[3)] $\dist(\ux_{i-1},\ux_i)\ge\delta_0$ and
 \[
  \delta_0
  \ge \frac{5C_1X_i}{X_{i+1}}
         + \frac{4C_1}{\delta_0 X_{i-1} X_i^{\gamma+1}}
  \ge \begin{cases}
       \dist(\ux_{i-1},\uv) &\text{if $i$ is even,}\\[2pt]
       \dist(\ux_{i-1},\uw) &\text{if $i$ is odd;}
      \end{cases}
 \]
 \item[4)] the unit vector $\uu_i$ perpendicular to
   $\langle \ux_{i-1}, \ux_i \rangle_\bR$ satisfies
 \[
  \dist(\uu_i,\uu)
      \le \frac{4C_1}{\delta_0^2 X_{i-1} X_i^{\gamma+1}}\,.
 \]
\end{itemize}
Moreover, we have $\uu\cdot\uv=\uu\cdot\uw=0$, $\dist(\ux_0,\uv)
\le 3\delta_0$ and $\dist(\ux_0,\uw)\le \delta_0$.
\end{lemma}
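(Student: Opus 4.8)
The plan is to build the required sequences by applying Lemma~\ref{lemma1} repeatedly. At step $i\ge 1$ we feed it the primitive pair $(\ux_{i-1},\ux_i)$ together with $Y=X_i^\gamma$, $X'=X_{i+1}$, and the unique integer $n\ge 2$ satisfying $q_{n-1}\le 2X_{i+1}/X_i^\gamma<q_n$ (such an $n$ exists because $q_1=1\le 2X_{i+1}/X_i^\gamma$, using $X_i^\gamma\le X_{i+1}$); its outputs are $\uy_i:=\uy$ and $\ux_{i+1}:=\ux'$. Then properties~1) and~2) of Lemma~\ref{lemma1} are exactly properties~1) and~2) of Lemma~\ref{lemma2}, and they give the a~priori norm bounds $X_j\le\|\ux_j\|\le 5C_1X_j$ for all $j\ge 0$ (with equality on the left when $j\le 1$). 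To run the iteration one verifies hypothesis~\eqref{lemma1:eq1} at each step: $Y\le X'$ is the first half of \eqref{lemma2:eq1}, the inequalities on $q_{n-1},q_n$ hold by the choice of $n$, and $2(\|\ux_{i-1}\|+\|\ux_i\|)\le X_i^\gamma$ follows by induction from the upper bounds $\|\ux_{i-1}\|\le 5C_1X_{i-1}$, $\|\ux_i\|\le 5C_1X_i$, the inequality $12C_1X_i\le X_i^\gamma$ of \eqref{lemma2:eq1}, and, for $i=1$, the hypothesis $5X_0\le X_1$. As only the \emph{upper} norm bounds from earlier steps enter here, the construction of $(\ux_i)_{i\ge 2}$ and $(\uy_i)_{i\ge 1}$, and with it properties~1) and~2), is unconditional.

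Write $d_i=\dist(\ux_{i-1},\ux_i)$. The heart of the proof is the lower bound $d_i\ge\delta_0$ for all $i\ge 1$, which I would establish by induction on $i$; the case $i=1$ is immediate since $d_1=2\delta_0$. Assume $d_j\ge\delta_0$ for all $j\le i$. Then property~3) of Lemma~\ref{lemma1}, combined with $X_{j-1}\le\|\ux_{j-1}\|$, $X_j\le\|\ux_j\|\le 5C_1X_j$ and $d_j\ge\delta_0$, yields
\[
 \dist(\ux_{j-1},\ux_{j+1})\ \le\ e_j\ :=\ \frac{5C_1X_j}{2X_{j+1}}+\frac{2C_1}{\delta_0\,X_{j-1}X_j^{\gamma+1}}
 \qquad(1\le j\le i),
\]
and telescoping the triangle inequality for $\dist$ down to $d_1$ gives
\[
 d_{i+1}\ \ge\ d_i-e_i\ \ge\ d_{i-1}-e_{i-1}-e_i\ \ge\ \cdots\ \ge\ d_1-\sum_{j=1}^{i}e_j\ \ge\ 2\delta_0-\sum_{j=1}^{\infty}e_j.
\]
The condition $2X_{i+1}^2\le X_iX_{i+2}$ of \eqref{lemma2:eq1} forces the ratios $X_j/X_{j+1}$ to at least halve at each step, and together with $12C_1X_i\le X_i^\gamma\le X_{i+1}$ it forces $X_{j-1}X_j^{\gamma+1}$ to at least double at each step; summing the two geometric series gives $\sum_{j\ge 1}e_j\le \frac{5C_1X_1}{X_2}+\frac{4C_1}{\delta_0 X_0X_1^{\gamma+1}}$, which is $\le\delta_0$ by \eqref{lemma2:eq2}. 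Hence $d_{i+1}\ge\delta_0$, closing the induction.

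With $d_i\ge\delta_0$ in hand for all $i$, the remaining assertions reduce to summing geometric tails. Property~4) of Lemma~\ref{lemma1} now gives $\dist(\uu_i,\uu_{i+1})\le 2C_1/(\delta_0^2 X_{i-1}X_i^{\gamma+1})$, which is summable; hence $(\uu_i)_{i\ge 1}$ converges in $\bP^2(\bR)$ to the class of a unit vector $\uu$, and summing over $j\ge i$ gives $\dist(\uu_i,\uu)\le 4C_1/(\delta_0^2 X_{i-1}X_i^{\gamma+1})$, which is property~4). Likewise the bounds $\dist(\ux_{i-1},\ux_{i+1})\le e_i$ are summable along each residue class of $i$ modulo~$2$, so $(\ux_{2k+1})_{k\ge 0}$ and $(\ux_{2k})_{k\ge 0}$ converge in $\bP^2(\bR)$ to classes of unit vectors $\uv$ and $\uw$ respectively; the corresponding tail sums bound $\dist(\ux_{i-1},\uv)$ (for $i$ even) and $\dist(\ux_{i-1},\uw)$ (for $i$ odd) above by $\eta_i:=\frac{5C_1X_i}{X_{i+1}}+\frac{4C_1}{\delta_0 X_{i-1}X_i^{\gamma+1}}$, and since $X_i/X_{i+1}$ decreases and $X_{i-1}X_i^{\gamma+1}$ increases we have $\eta_i\le\eta_1$, while $\eta_1\le\delta_0$ by \eqref{lemma2:eq2}; this completes property~3). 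Finally, $\uu_i\cdot\ux_{i-1}=\uu_i\cdot\ux_i=0$ for every $i$, so passing to the limit along the two residue classes of $i$ modulo~$2$ gives $\uu\cdot\uv=\uu\cdot\uw=0$; the bound $\dist(\ux_0,\uw)\le\eta_1\le\delta_0$ is the case $i=1$ just obtained, while $\dist(\ux_0,\uv)\le\dist(\ux_0,\ux_1)+\dist(\ux_1,\uv)\le 2\delta_0+\eta_2\le 3\delta_0$; and $\dist(\uv,\uw)\ge\dist(\ux_0,\ux_1)-\dist(\ux_0,\uw)-\dist(\ux_1,\uv)\ge\delta_0-\eta_2>0$ shows that $\uv$ and $\uw$ span a plane, to which $\uu$ is orthogonal, so $\uu,\uv,\uw$ are linearly independent.

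I expect the lower bound $d_i\ge\delta_0$ to be the main obstacle: it is precisely what the hypotheses \eqref{lemma2:eq1}--\eqref{lemma2:eq2} are tuned for, and a one-step induction $d_{i+1}\ge d_i-(\text{error})$ does not close. Two observations make it work: first, that properties~1), 2) and all the norm bounds hold unconditionally, so the estimates~3), 4) of Lemma~\ref{lemma1} are available at every index $\le i$ as soon as the lower bound is known there; and second, that the triangle inequality should be telescoped all the way down to the genuinely large value $d_1=2\delta_0$, the accumulated error $\sum_j e_j$ being exactly what \eqref{lemma2:eq2} bounds.
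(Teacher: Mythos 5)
Your proof is correct and follows essentially the same route as the paper: the same recursive application of Lemma~\ref{lemma1} with $Y=X_i^\gamma$, $X'=X_{i+1}$, the same convergence argument for $\uu_i$ and for the odd/even subsequences of $\ux_{i-1}$, and the same tail-sum bounds of order $2\delta_i$. The one point of divergence is the mechanism for establishing $\dist(\ux_{i-1},\ux_i)\ge\delta_0$: the paper propagates the two-part invariant $\dist(\ux_{i-1},\ux_i)\ge\delta_0+\delta_{i-1}$ and $\dist(\ux_{i-1},\ux_{i+1})\le\delta_i$ (your $e_i$) by a single inductive step using $\delta_{i-1}\ge 2\delta_i$, whereas you run a strong induction and telescope the triangle inequality back to $d_1=2\delta_0$, bounding the total error by $\sum_{j\ge 1}e_j\le\delta_0$ directly from \eqref{lemma2:eq2}; these are equivalent in substance, though yours makes the role of \eqref{lemma2:eq2} as an absolute error budget a bit more visible. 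Your explicit verification that $\uu,\uv,\uw$ are linearly independent (via $\dist(\uv,\uw)\ge\delta_0-\eta_2>0$) is a small addition the paper leaves implicit.
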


\begin{proof}
Starting from the primitive pair $(\ux_0,\ux_1)$, Lemma \ref{lemma1} allows
us to construct recursively pairs of vectors $(\uy_1,\ux_2), (\uy_2,\ux_3), \dots$
which for each $i\ge 1$ fulfill the conditions 1) and 2) of Lemma \ref{lemma2}
as well as
\begin{itemize}
 \item[3')] $\disp \dist(\ux_{i-1},\ux_{i+1})
   \le \frac{5C_1X_i}{2X_{i+1}} + \frac{2C_1}{X_{i-1}X_i^{\gamma+1}\dist(\ux_{i-1},\ux_i)}$,
 \smallskip
 \item[4')] $\disp \dist(\uu_i,\uu_{i+1})
   \le \frac{2C_1}{X_{i-1}X_i^{\gamma+1}\dist(\ux_{i-1},\ux_i)\dist(\ux_i,\ux_{i+1})}$.
\end{itemize}
To construct $(\uy_i,\ux_{i+1})$ from the preceeding pairs,
we simply apply Lemma \ref{lemma1} with $\ux^*=\ux_{i-1}$, $\ux=\ux_i$,
$Y=X_i^\gamma$ and $X'=X_{i+1}$.  Then we define $\uy_i=\uy$ and $\ux_{i+1}=\ux'$.
The lemma applies because, at each step the pair $(\ux_{i-1},\ux_i)$ is
primitive and
\[
 2(\|\ux_{i-1}\|+\|\ux_i\|)
   \le 10C_1(X_{i-1}+X_i)
   \le 12C_1X_i
   \le X_i^\gamma
   \le X_{i+1}\,.
\]

Define
\[
 \delta_i := \frac{5C_1X_i}{2X_{i+1}} + \frac{2C_1}{\delta_0 X_{i-1}X_i^{\gamma+1}}
 \quad
 \text{for each $i\ge 1$.}
\]
Then the hypotheses \eqref{lemma2:eq1} and \eqref{lemma2:eq2} imply that
$\delta_i\le \delta_{i-1}/2$ for each $i\ge 1$.  We claim that
\begin{equation}
 \label{lemma2:eq3}
 \dist(\ux_{i-1},\ux_i) \ge \delta_0+\delta_{i-1}
 \et
 \dist(\ux_{i-1},\ux_{i+1}) \le \delta_i
 \quad
 \text{for each $i\ge 1$.}
\end{equation}
For $i=1$, this is clear because $\dist(\ux_0,\ux_1)=2\delta_0$ and so
3') yields $\dist(\ux_0,\ux_2)\le \delta_1$.  Moreover, if \eqref{lemma2:eq3}
holds for some $i\ge 1$, then
\[
 \dist(\ux_i,\ux_{i+1})
 \ge \dist(\ux_i,\ux_{i-1})-\dist(\ux_{i-1},\ux_{i+1})
 \ge \delta_0+\delta_{i-1}-\delta_i
 \ge \delta_0+\delta_i
\]
and so 3') with $i$ replaced by $i+1$ yields $\dist(\ux_i,\ux_{i+2}) \le \delta_{i+1}$.

In view of the second part of \eqref{lemma2:eq3}, we conclude that, for even $i\ge 2$,
the class of $\ux_{i-1}$ in $\bP^2(\bR)$ converges to the class of a unit vector $\uv$
with
\[
 \dist(\ux_{i-1},\uv)
 \le \sum_{j=0}^\infty \dist(\ux_{i+2j-1},\ux_{i+2j+1})
 \le \sum_{j=0}^\infty \delta_{i+2j}
 \le 2\delta_i.
\]
Similarly, for odd $i\ge 1$, the class of $\ux_{i-1}$ converges to the
class of a unit vector $\uw$ with $\dist(\ux_{i-1},\uw) \le 2\delta_i$.
This proves 3) and implies in particular that $\dist(\ux_0,\uw)\le
\delta_0$ and that
\[
 \dist(\ux_0,\uv)\le \dist(\ux_0,\ux_1)+\dist(\ux_1,\uv) \le 3\delta_0.
\]

The fact that $\dist(\ux_{i-1},\ux_i) \ge \delta_0$ for each $i\ge 1$
combined with 4') yields
\[
 \dist(\uu_i,\uu_{i+1})
 \le \frac{2C_1}{\delta_0^2X_{i-1}X_i^{\gamma+1}}
 \quad
 \text{for each $i\ge 1$.}
\]
From this we conclude that the class of $\uu_i$ in $\bP^2(\bR)$
converges to that of a unit vector $\uu\in\bR^3$ with
\[
 \dist(\uu_i,\uu) \le \frac{4C_1}{\delta_0^2X_{i-1}X_i^{\gamma+1}}
 \quad
 \text{for each $i\ge 1$.}
\]
This proves 4).  Moreover, the relations $\uu_i\cdot\ux_i=0$ ($i\ge 1$) imply
by continuity that $\uu\cdot\uv=\uu\cdot\uw=0$.
\end{proof}

\begin{lemma}
\label{lemma3}
In the context of Lemma \ref{lemma2}, suppose furthermore that
\begin{equation}
 \label{lemma3:eq0}
  X_{i+1} \ge X_{i-1}X_i^{\gamma+2}
  \quad
  \text{for each $i\ge 1$.}
\end{equation}
If the product $\delta_0^2 X_1$ is larger than a suitable function
of $C_1$, then, for each index $i\ge 1$ and each
$\ux\in\bZ^3$ with
\begin{equation}
 \label{lemma3:eq1}
 \frac{X_i}{X_1} \le \|\ux\| < \frac{X_{i+1}}{X_1},
\end{equation}
we have
\[
 |\ux\cdot\uu|
 \ge \frac{1}{X_1^3 X_{i-1}\|\ux\|^{\gamma}}
     \begin{cases}
        1 &\text{if \ $\ux\notin \langle \ux_{i-1},\ux_i\rangle_\bZ$,}\\[5pt]
        \dist(\ux,\{\uv,\uw\}) &\text{if \ $\ux\in \langle\ux_{i-1},\ux_i\rangle_\bZ$.}
     \end{cases}
\]
\end{lemma}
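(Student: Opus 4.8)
Here is how I would attack the statement.

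\medskip

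The plan is to play $\ux$ off against the two rational planes $\pi_i=\langle\ux_{i-1},\ux_i\rangle_\bR$ and $\pi_{i+1}=\langle\ux_i,\ux_{i+1}\rangle_\bR$, whose orthogonal unit vectors $\uu_i,\uu_{i+1}$ are extremely close to $\uu$ by part~4) of Lemma~\ref{lemma2} applied at indices $i$ and $i+1$. Fix the integer $n$ attached to $i$ in 1) of Lemma~\ref{lemma2}, so $q_{n-1}\le 2X_{i+1}/X_i^\gamma<q_n$. Since $\det(\ux_{i-1},\ux_i,\uy_i)=1$ I write $\ux=a\ux_{i-1}+b\ux_i+k\uy_i$ with $a,b,k\in\bZ$; the determinant relations in 1) then give $\ux\cdot\uu_i=k/H_i$ and $\ux\cdot\uu_{i+1}=(aq_n-kp_n)/H_{i+1}$, where $H_j=\|\ux_{j-1}\wedge\ux_j\|$. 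Parts~2)--3) bound $\delta_0X_{j-1}X_j\le H_j\le 25C_1^2X_{j-1}X_j$, and part~4) bounds the transfer errors $E_j:=2\|\ux\|\dist(\uu,\uu_j)\le 8C_1\|\ux\|/(\delta_0^2X_{j-1}X_j^{\gamma+1})$; using \eqref{lemma3:eq0} and the largeness of $\delta_0^2X_1$, the point is that $E_{i+1}$ stays negligible compared with the target value $T:=1/(X_1^3X_{i-1}\|\ux\|^\gamma)$ throughout, while $E_i$ is negligible next to $H_i^{-1}$ only while $\|\ux\|\lesssim\delta_0^2X_i^\gamma/C_1^3$. I use $|\ux\cdot\uu|\ge|\ux\cdot\uu_j|-E_j$ and $|\ux\cdot\uu_j|\le|\ux\cdot\uu|+E_j$ freely. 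Two bookkeeping facts: $k=0$ is the same as $\ux\in\langle\ux_{i-1},\ux_i\rangle_\bZ$, and (because $\|\ux\|<X_{i+1}/X_1$ while $\delta_0X_1$ is large) $\ux\notin\pi_i$ forces $\ux\notin\pi_{i+1}$, i.e.\ $aq_n-kp_n\ne0$.

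\medskip

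\emph{First case: $\ux\notin\langle\ux_{i-1},\ux_i\rangle_\bZ$}, so $k\ne0$ and $aq_n-kp_n\ne0$. If $|k|\ge q_n$, then $|\ux\cdot\uu_i|=|k|/H_i\ge q_n/H_i$ is already so large, thanks to $q_n>2X_{i+1}/X_i^\gamma$, $X_{i+1}\ge X_{i-1}X_i^{\gamma+2}$ and $H_i\le 25C_1^2X_{i-1}X_i$, that it exceeds $X_{i-1}^{-1}X_i^{-\gamma}\ge T$ by a wide margin, also absorbing $E_i$. If $1\le|k|<q_n$ and $\|\ux\|\le\delta_0^2X_i^\gamma/(400C_1^3)$, then $E_i\le\tfrac12H_i^{-1}$, so $|\ux\cdot\uu|\ge H_i^{-1}-E_i\ge\tfrac12H_i^{-1}\ge T$ (using $H_i\le25C_1^2X_{i-1}X_i$ and $\|\ux\|\ge X_i/X_1$). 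The crux is the remaining range $1\le|k|<q_n$, $\|\ux\|>\delta_0^2X_i^\gamma/(400C_1^3)$, where $\ux$ is too long for the comparison with $\uu_i$ to survive $E_i$ yet too short for the trivial bound $|\ux\cdot\uu_{i+1}|\ge H_{i+1}^{-1}$ to beat $T$. Here one combines $|\ux\cdot\uu_i|=|k|/H_i$ with the continued-fraction bound of Lemma~\ref{lemma:qpn-pqn}, which gives $|aq_n-kp_n|\ge q_n/(2C_1|k|)$ and hence $|\ux\cdot\uu_{i+1}|\ge q_n/(2C_1|k|H_{i+1})$; multiplying the two resulting inequalities,
\[
\bigl(|\ux\cdot\uu|+E_i\bigr)\bigl(|\ux\cdot\uu|+E_{i+1}\bigr)\ \ge\ \frac{q_n}{2C_1H_iH_{i+1}}\ \ge\ \frac{1}{625\,C_1^5\,X_{i-1}X_i^{\gamma+2}}.
\]
Assuming $|\ux\cdot\uu|<T$ and using $E_{i+1}\le T$ forces either $T^2$ or $E_iT$ to exceed $q_n/(8C_1H_iH_{i+1})$; but the hypothesis $\|\ux\|>\delta_0^2X_i^\gamma/(400C_1^3)$, together with $q_n>2X_{i+1}/X_i^\gamma$ and $X_{i+1}\ge X_{i-1}X_i^{\gamma+2}$, is exactly what makes both $T^2\le q_n/(8C_1H_iH_{i+1})$ and $E_iT\le q_n/(8C_1H_iH_{i+1})$ — a contradiction, so $|\ux\cdot\uu|\ge T$.

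\medskip

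\emph{Second case: $\ux=a\ux_{i-1}+b\ux_i\in\langle\ux_{i-1},\ux_i\rangle_\bZ$}, so $k=0$, $\ux\cdot\uu_i=0$ and $\ux\cdot\uu_{i+1}=aq_n/H_{i+1}$. By Lemma~\ref{lemma:vperp} it suffices to show $|\ux\cdot\uu|\ge 2\min\{|\ux\cdot\tuv|,|\ux\cdot\tuw|\}/(X_1^3X_{i-1}\|\ux\|^{\gamma+1})$, using also that $\min\{|\ux\cdot\tuv|,|\ux\cdot\tuw|\}\le\|\ux\|\dist(\ux,\{\uv,\uw\})$. Here I exploit the comparison $\delta_0\max\{|a|\,\|\ux_{i-1}\|,|b|\,\|\ux_i\|\}\le\|\ux\|\le 5C_1(|a|X_{i-1}+|b|X_i)$ (valid because $\ux_{i-1},\ux_i$ are not nearly parallel), together with the facts that $\dist(\ux_{i-1},\{\uv,\uw\})\le 10C_1/(\delta_0X_{i-1}X_i^{\gamma+1})$ and $\dist(\ux_i,\{\uv,\uw\})\le 10C_1/(\delta_0X_iX_{i+1}^{\gamma+1})$, which follow from 3) of Lemma~\ref{lemma2} and \eqref{lemma3:eq0}. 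If $a\ne0$, then $|\ux\cdot\uu_{i+1}|\ge q_n/H_{i+1}\ge 2/(25C_1^2X_i^{\gamma+1})$ and $E_{i+1}$ is negligible, so $|\ux\cdot\uu|\ge |a|/(25C_1^2X_i^{\gamma+1})$; meanwhile $\dist(\ux,\{\uv,\uw\})\le |a|H_i/(\|\ux\|\,\|\ux_i\|)+10C_1/(\delta_0X_iX_{i+1}^{\gamma+1})$, and each term is checked to be small enough against $|a|X_1^3X_{i-1}\|\ux\|^{\gamma}/(C_1^2X_i^{\gamma+1})$. If $a=0$, then $\ux=b\ux_i$, so $\dist(\ux,\{\uv,\uw\})\le 10C_1/(\delta_0X_iX_{i+1}^{\gamma+1})$ is tiny, while comparing with $\uu_{i+2}$ — for which $\ux_i\cdot\uu_{i+2}=q/H_{i+2}$ with $q$ the continued-fraction index attached to step $i+1$ — gives $|\ux\cdot\uu|=|b|\,|\ux_i\cdot\uu|\ge 1/(25C_1^2X_{i+1}^{\gamma+1})$, which beats the target because $\delta_0X_1^4$ is large.

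\medskip

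The hard part will be the ``middle range'' of the first case ($1\le|k|<q_n$ with $\|\ux\|$ past $\delta_0^2X_i^\gamma/C_1^3$): neither comparison plane suffices on its own, and only the interplay between $|\ux\cdot\uu_i|=|k|/H_i$ and the continued-fraction lower bound for $|\ux\cdot\uu_{i+1}|$, closed up by the new hypothesis $X_{i+1}\ge X_{i-1}X_i^{\gamma+2}$, saves the estimate — which is precisely why \eqref{lemma3:eq0} is imposed in this lemma. Everything else is careful but routine tracking of the (many) constants, all of which gets absorbed into the requirement that $\delta_0^2X_1$ exceed a suitable power $C_1^{O(1)}$.
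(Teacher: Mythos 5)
Your setup matches the paper's exactly: write $\ux = a\ux_{i-1}+b\ux_i+k\uy_i$ (the paper's $p,r,q$ are your $a,b,k$), use the determinant formulas $\det(\ux,\ux_{i-1},\ux_i)=k$ and $\det(\ux,\ux_i,\ux_{i+1})=aq_n-kp_n$, control the transfer errors $E_j=2\|\ux\|\dist(\uu,\uu_j)$ via part~4) of Lemma~\ref{lemma2}, and invoke Lemma~\ref{lemma:qpn-pqn} at the crucial point. The case $\ux\in\langle\ux_{i-1},\ux_i\rangle_\bZ$ is handled as in the paper's Cases~3 and~4. The decisive difference is in the case $k\ne 0$: you split on $|k|\gtrless q_n$ and then on $\|\ux\|\gtrless\delta_0^2X_i^\gamma/(400C_1^3)$ and, in the hard subrange, multiply the bounds from the two planes $\uu_i$ and $\uu_{i+1}$; the paper instead splits on $\|k\uy_i\|\gtrless C_2\|\ux\|$ with $C_2=(8C_1)^3/\delta_0^2$ and never forms a product --- its Case~2 deduces a direct lower bound on $|\ux\cdot\uu_{i+1}|$ from $|k|/q_n < C_2\|\ux\|/(2X_{i+1})$ together with Lemma~\ref{lemma:qpn-pqn}, and eliminates $X_i$ at the end via the inequality $C_2\|\ux\|>X_i^\gamma$ (this is where $\gamma^2=\gamma+1$ does its work). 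Both routes are legitimate; your product trick has the virtue of making the role of the two consecutive good planes transparent, while the paper's direct route keeps the transfer errors entirely on one plane at a time.

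There is one genuine gap in your crux case. You invoke ``$E_{i+1}\le T$'' to reduce the product expansion to the two terms $T^2$ and $E_iT$. For $i\ge 2$ this follows from \eqref{lemma3:eq0}, but for $i=1$ it can fail under the stated hypotheses: one computes
\[
\frac{E_{i+1}}{T}\le\frac{8C_1\,X_1^{2-\gamma}X_{i-1}}{\delta_0^2\,X_i},
\]
and for $i=1$ this is $8C_1X_0/(\delta_0^2X_1^{\gamma-1})$. The lemma only assumes $\delta_0^2X_1$ large, and $X_0$ may be as large as $X_1/5$, in which case $\delta_0^2X_1^{\gamma-1}/X_0$ need not be $\ge 8C_1$ (take, say, $\delta_0$ small with $\delta_0^2 X_1$ moderately large). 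The repair is easy and keeps your structure: expand $(T+E_i)(T+E_{i+1})$ into its four terms $T^2$, $TE_i$, $TE_{i+1}$, $E_iE_{i+1}$ and check each is $<q_n/(8C_1H_iH_{i+1})$ --- the bounds $\|\ux\|^2<X_{i+1}^2/X_1^2$ and $\|\ux\|>\delta_0^2X_i^\gamma/(400C_1^3)$ (and $\gamma^2=\gamma+1$) make all four go through under $\delta_0^2X_1\gg_ {C_1}1$. With that one correction your argument is complete.

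Two small checkable points you glossed over, both fine: your claim that $\ux\notin\pi_i$ forces $aq_n-kp_n\ne0$ is not needed (Lemma~\ref{lemma:qpn-pqn} already supplies the nonvanishing via coprimality of $p_n,q_n$ whenever $1\le|k|<q_n$); and in your second case the reduction to $|\ux\cdot\uu|\ge2\min\{|\ux\cdot\tuv|,|\ux\cdot\tuw|\}/(X_1^3X_{i-1}\|\ux\|^{\gamma+1})$ via Lemma~\ref{lemma:vperp} is correct but deserves a line showing $X_1^3X_{i-1}\|\ux\|^{\gamma+1}\ge2$.
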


In the proof below, we use repeatedly the estimates 1), \ldots, 4)
of lemma \ref{lemma2}.  To alleviate the exposition, we simply
refer to them as 1), \ldots, 4) respectively.  We also put a
star on the right of an inequality sign to mean that this
inequality holds when $\delta_0^2 X_1$ is sufficiently large,
with a lower bound depending only on $C_1$.

\begin{proof}
For any integer $j\ge 1$ and any $\ux\in\bZ^3$, we have
\[
 |\ux\cdot\uu_j|
   = \frac{|\det(\ux,\ux_{j-1},\ux_j)|}{\|\ux_{j-1}\wedge\ux_j\|}
   \ge \frac{|\det(\ux,\ux_{j-1},\ux_j)|}{\|\ux_{j-1}\|\,\|\ux_j\|}
   \ge \frac{|\det(\ux,\ux_{j-1},\ux_j)|}{(5C_1)^2 X_{j-1} X_j}
\]
by 2) and the fact that $\|\ux_i\|=X_i$ when $i\le 1$. Since
$\uu_j$ and $\uu$ are unit vectors, we also have
\[
 |\ux\cdot(\uu_j-\uu)|
   \le \|\ux\|\,\|\uu_j-\uu\|
   \le 2\|\ux\|\dist(\uu_j,\uu)
   \le \frac{8C_1\|\ux\|}{\delta_0^2 X_{j-1} X_j^{\gamma+1}}
\]
by 4). Combining the last two estimates, we get
\begin{equation}
\label{x.u}
 |\ux\cdot\uu|
   \ge |\ux\cdot\uu_j| - |\ux\cdot(\uu_j-\uu)| \\
   \ge \frac{|\det(\ux,\ux_{j-1},\ux_j)|}{(5C_1)^2 X_{j-1} X_j}
       - \frac{8C_1\|\ux\|}{\delta_0^2 X_{j-1} X_j^{\gamma+1}}.
\end{equation}

Now, assume that $\ux$ satisfies the condition \eqref{lemma3:eq1}
for some $i\ge 1$. Since $(\ux_{i-1},\ux_i,\uy_i)$ is a basis
of $\bZ^3$, we may write
\begin{equation}
\label{x}
 \ux = q\uy_i + p\ux_{i-1} + r\ux_i
\end{equation}
with $p,q,r \in \bZ$.  By 1), this yields
\begin{equation}
\label{detxxi-1xi}
 \det(\ux,\ux_{i-1},\ux_i) = q\det(\ux_{i-1},\ux_i,\uy_i) = q\,,
\end{equation}
as well as
\begin{equation}
\label{detxxixi+1}
 \det(\ux,\ux_i,\ux_{i+1})
   = q\det(\uy_i,\ux_i,\ux_{i+1}) + p\det(\ux_{i-1},\ux_i,\ux_{i+1})
   = -(qp_n-pq_n),
\end{equation}
where $n\ge 2$ is the integer for which
\begin{equation}
\label{defn}
 q_{n-1} \le \frac{2X_{i+1}}{X_i^\gamma} < q_n.
\end{equation}
In the computations below, we use either \eqref{detxxi-1xi}
combined with \eqref{x.u} for $j=i$, or \eqref{detxxixi+1}
combined with \eqref{x.u} for $j=i+1$.  We distinguish four cases.

\textbf{Case 1.}
Suppose first that $\|q\uy_i\| \ge C_2\|\ux\|$ where
$C_2 = (8C_1)^3/\delta_0^2$.  Then, using the upper bound
for $\|\uy_i\|$ provided by 2), we find that
\[
 |q| = \frac{\|q\uy_i\|}{\|\uy_i\|}
  \ge \frac{C_2\|\ux\|}{2 X_i^{\gamma}}.
\]
Applying \eqref{x.u} with $j=i$, the above inequality
combined with \eqref{detxxi-1xi} yields
\[
 |\ux\cdot\uu|
   \ge \frac{C_2\|\ux\|}{2(5C_1)^2 X_{i-1} X_i^{\gamma+1}}
       - \frac{8C_1\|\ux\|}{\delta_0^2 X_{i-1} X_i^{\gamma+1}}
   \ge \frac{2C_1\|\ux\|}{\delta_0^2 X_{i-1} X_i^{\gamma+1}}.
\]
Using the hypothesis $X_i \le X_1\|\ux\|$ to eliminate $X_i$
from the last estimate, we conclude that
\[
 |\ux\cdot\uu|
   \ge \frac{2C_1}{\delta_0^2 X_1^{\gamma+1} X_{i-1} \|\ux\|^{\gamma}}
   \ge \frac{1}{X_1^3 X_{i-1} \|\ux\|^{\gamma}}.
\]

\textbf{Case 2.}
Suppose now that $\disp 0 <\|q\uy_i\| < C_2\|\ux\|$ where
$C_2$ is as in Case 1.  For the integer $n$ defined by \eqref{defn}
we find, using 2), that
\[
 q_n\|\uy_i\| \ge q_n X_i^\gamma \ge 2X_{i+1},
\]
and thus
\begin{equation}
\label{Case2:eq1}
 \frac{|q|}{q_n}
   = \frac{\|q\uy_i\|}{q_n\|\uy_i\|}
   < \frac{C_2\|\ux\|}{2X_{i+1}}.
\end{equation}
Using $\|\ux\| < X_1^{-1}X_{i+1}$, this gives
\[
 |q| < \frac{C_2}{2X_1}q_n \lee q_n.
\]
Then, by Lemma \ref{lemma:qpn-pqn}, we conclude that
\[
 |qp_n-pq_n|
   \ge \frac{q_n}{2C_1|q|}
   \ge \frac{X_{i+1}}{C_1C_2\|\ux\|}\,,
\]
where the second estimate comes from \eqref{Case2:eq1}.
Applying \eqref{x.u} with $j=i+1$, the above inequality
combined with \eqref{detxxixi+1} yields
\[
 |\ux\cdot\uu|
   \ge \frac{1}{C_3 X_i \|\ux\|}
       - \frac{8C_1\|\ux\|}{\delta_0^2 X_i X_{i+1}^{\gamma+1}}
 \quad
 \text{where}
 \quad
 C_3 = 25 C_1^3 C_2.
\]
Using $X_{i+1} > X_1 \|\ux\|$, this in turn yields
\[
 |\ux\cdot\uu|
   \ge \frac{1}{C_3 X_i \|\ux\|}
       - \frac{8C_1}{\delta_0^2 X_1^{\gamma+1} X_i \|\ux\|^\gamma}\\
   \gee \frac{1}{2C_3 X_i \|\ux\|}.
\]
Finally, since $q\neq 0$, we have $C_2\|\ux\| > \|q\uy_i\|
\ge \|\uy_i\| \ge X_i^\gamma$ by 2).  Using this to eliminate
$X_i$ from the previous inequality, we conclude that
\[
 |\ux\cdot\uu|
   \ge \frac{1}{2C_3 C_2^{1/\gamma} \|\ux\|^\gamma}
   \gee \frac{1}{X_1^3 \|\ux\|^\gamma}.
\]

The above two cases cover the situation where $\ux\notin
\langle \ux_{i-1},\ux_i\rangle_\bZ$. The next two cases
complete the proof of the lemma when $\ux\in
\langle \ux_{i-1},\ux_i\rangle_\bZ$.

\textbf{Case 3.}  Suppose that $q=0$ but $p\neq 0$.  Then,
the inequality \eqref{x.u} with $j=i+1$ combined with
\eqref{detxxixi+1} yields
\[
 |\ux\cdot\uu|
   \ge \frac{|p|q_n}{(5C_1)^2 X_i X_{i+1}}
      - \frac{8C_1\|\ux\|}{\delta_0^2 X_i X_{i+1}^{\gamma+1}}.
\]
Using the lower bound for $q_n$ given by \eqref{defn}
and the hypothesis that $\|\ux\|\le X_1^{-1}X_{i+1}$,
we deduce that
\begin{equation}
\label{Case3:eq1}
 |\ux\cdot\uu|
   \ge \frac{2|p|}{(5C_1)^2 X_i^{\gamma+1}}
       - \frac{8C_1}{\delta_0^2 X_1 X_i X_{i+1}^{\gamma}}
   \gee \frac{|p|}{(5C_1)^2 X_i^{\gamma+1}}.
\end{equation}
We also note that
\[
 \dist(\ux,\ux_i)
  = \frac{\|\ux\wedge\ux_i\|}{\|\ux\|\,\|\ux_i\|}
  = \frac{|p|\,\|\ux_{i-1}\wedge\ux_i\|}{\|\ux\|\,\|\ux_i\|}
  \le \frac{|p|\,\|\ux_{i-1}\|}{\|\ux\|}
  \le \frac{5C_1 |p|\,X_{i-1}}{\|\ux\|}
\]
where the last step uses 2) and the fact that
$\|\ux_{i-1}\|=X_{i-1}$ when $i\le 2$.  Now, consider
the main estimate of 3) with $i$ replaced by $i+1$.  Applying
the hypothesis \eqref{lemma3:eq0} with $i$ replaced by $i+1$ 
to eliminate $X_{i+2}$, it gives
\begin{equation}
\label{Case3:eq2}
 \dist(\ux_i,\{\uv,\uw\})
    \le \frac{5C_1X_{i+1}}{X_{i+2}}
        + \frac{4C_1}{\delta_0 X_i X_{i+1}^{\gamma+1}}
    \le \frac{9C_1}{\delta_0 X_i X_{i+1}^{\gamma+1}}\,.
\end{equation}
Since $\|\ux\|\le X_1^{-1}X_{i+1}$, the latter two estimates yield
\[
 \dist(\ux,\{\uv,\uw\})
  \le \dist(\ux,\ux_i)+\dist(\ux_i,\{\uv,\uw\})
  \lee \frac{6C_1|p|\,X_{i-1}}{\|\ux\|}.
\]
We view this as a lower bound for $|p|$.  Substituting
it into \eqref{Case3:eq1} and then using $X_i\le X_1\|\ux\|$
to eliminate $X_i$, we find
\[
 |\ux\cdot\uu|
   \ge \frac{\dist(\ux,\{\uv,\uw\})\|\ux\|}{(6C_1)^3 X_{i-1} X_i^{\gamma+1}}
   \ge \frac{\dist(\ux,\{\uv,\uw\})}{(6C_1)^3 X_1^{\gamma+1} X_{i-1} \|\ux\|^{\gamma}}
   \gee \frac{\dist(\ux,\{\uv,\uw\})}{X_1^3 X_{i-1} \|\ux\|^{\gamma}}.
\]

\textbf{Case 4.}  Finally, suppose that $p = q = 0$.
By 2) and \eqref{lemma3:eq0}, we have 
$\|\ux_{i-1}\|\lee X_1X_{i-1} \le X_1^{-1}X_{i+1}$.  Thus, 
the estimate \eqref{Case3:eq1} of Case 3 applies in
particular to the choice of $\ux=\ux_{i-1}$ (corresponding
to $p=1$ and $q=r=0$).  This gives
\[
 |\ux_{i-1}\cdot\uu|
   \ge \frac{1}{(5C_1)^2 X_i^{\gamma+1}}\,.
\]
In the present case, we have $\ux=r\ux_i$ with $r\neq 0$.  
Applying the above inequality with $i$ replaced by
$i+1$ and then using 2), we thus find that
\[
 |\ux\cdot\uu|
   = |r\ux_i\cdot\uu|
   \ge \frac{|r|}{(5C_1)^2 X_{i+1}^{\gamma+1}}
   = \frac{\|\ux\|}{(5C_1)^2 X_{i+1}^{\gamma+1}\|\ux_i\|}
   \ge \frac{\|\ux\|}{(5C_1)^3 X_i X_{i+1}^{\gamma+1}}\,.
\]
By \eqref{Case3:eq2}, we also have that
\[
 \dist(\ux,\{\uv,\uw\})
  = \dist(\ux_i,\{\uv,\uw\})
  \le \frac{9C_1}{\delta_0 X_i X_{i+1}^{\gamma+1}}\,,
\]
and so
\[
 |\ux\cdot\uu|
   \ge \frac{\delta_0\|\ux\|}{(6C_1)^4} \dist(\ux,\{\uv,\uw\})
   \gee \frac{\dist(\ux,\{\uv,\uw\})}{X_1^3 X_{i-1} \|\ux\|^{\gamma}}\,.
\qedhere
\]
\end{proof}

\section{Proof of Theorem C}
\label{sec:proof}

Without loss of generality, we may assume that $\ux_0$
is primitive.  Then, it belongs to a
basis $(\ux_0,\ux'_0,\ux''_0)$ of $\bZ^3$.  Upon
replacing $\ux'_0$ by $\ux'_0+m\ux_0$ for a sufficiently
large positive integer $m$ if necessary, we may further assume
that $\dist(\ux_0,\ux'_0)\le \delta/2$.  We define
\begin{equation}
 \label{def_delta0}
 \ux_1=n\ux'_0+\ux_0,
 \quad
 X_0=\|\ux_0\|,
 \quad
 X_1=\|\ux_1\|
 \et
 \delta_0=\frac{1}{2}\dist(\ux_0,\ux_1),
\end{equation}
where $n$ is a positive integer to be determined. Since
\[
 \lim_{n\to\infty} X_1 =\infty
 \et
 \lim_{n\to\infty} \delta_0
   = \frac{1}{2}\dist(\ux_0,\ux'_0)
   \le \frac{\delta}{4}\,,
\]
we can choose $n$ so that the following conditions hold:
\begin{itemize}
\item $X_1\ge \max\{5X_0, (12C_1)^\gamma\}$ and $\delta_0\le \delta/3$,
\item the inequality \eqref{lemma2:eq2} of Lemma \ref{lemma2} holds
  for any choice of $X_2$ with $X_2\ge X_1^\gamma$,
\item the product $\delta_0^2X_1$ is larger than the
  function of $C_1$ involved in Lemma \ref{lemma3}.
\end{itemize}
Then, for each $i\ge 1$, we choose recursively a value for
$X_{i+1}$ so that
\[
 X_{i+1}\ge X_{i-1}X_i^{\gamma+2}
 \et
 \psi\left(\frac{X_{i+1}}{X_1}\right) \ge X_1^3X_{i}.
\]
This ensures that all the hypotheses of Lemmas \ref{lemma2} and
\ref{lemma3} are satisfied.  We claim that the vectors $\uu$,
$\uv$ and $\uw$ provided by Lemma \ref{lemma2} satisfy all the
conditions of the theorem.

First of all, the last assertion of Lemma \ref{lemma2} together
with the fact that $3\delta_0\le \delta$ implies that
condition (ii) is fulfilled.

In order to verify condition (iii), we may restrict to the
values of $X$ with $X\ge 5C_1X_0$.  For such $X$, there exists
a unique index $i\ge 1$ such that
\[
 5C_1X_{i-1}\le X < 5C_1X_i.
\]
Then the point $\ux:=\ux_{i-1}$ has the required properties.
It satisfies $\|\ux_{i-1}\| \le 5C_1X_{i-1}\le X$ and
\[
 |\ux_{i-1}\cdot\uu|
  = |\ux_{i-1}\cdot(\uu-\uu_i)|
  \le \|\ux_{i-1}\|\,\|\uu-\uu_i\|
  \le 2\|\ux_{i-1}\|\dist(\uu_i,\uu).
\]
Using parts 2) and 4) of Lemma \ref{lemma2} together with
the fact that $X_i \ge X/(5C_1)$, this becomes
\[
 |\ux_{i-1}\cdot\uu|
  \le 2(5C_1X_{i-1})\frac{4C_1}{\delta_0^2 X_{i-1} X_i^{\gamma+1}}
  \le \frac{C_4}{X^{\gamma+1}}\,,
\]
where $C_4=(6C_1)^5\delta_0^{-2}$.  Combining part 3)
of Lemma \ref{lemma2} with the hypothesis that $X_{i+1}\ge
X_{i-1}X_i^{\gamma+2}$, we also find that
\[
 \dist(\ux_{i-1},\{\uv,\uw\})
   \le \frac{5C_1X_i}{X_{i+1}}
       + \frac{4C_1}{\delta_0 X_{i-1} X_i^{\gamma+1}}
   \le \frac{9C_1}{\delta_0 X_{i-1} X_i^{\gamma+1}},
\]
and thus, using Lemma \ref{lemma:vperp}, we obtain
\[
 \min\{|\ux_{i-1}\cdot\tuv|, |\ux_{i-1}\cdot\tuw|\}
   \le \|\ux_{i-1}\| \dist(\ux_{i-1},\{\uv,\uw\})
   \le \frac{45 C_1^2}{\delta_0 X_i^{\gamma+1}}
   \le \frac{C_4}{X^{\gamma+1}}.
\]

To prove condition (iv), choose an arbitrary $\ux\in\bZ^3$
with $\|\ux\| \ge X_2/X_1$. Then, there exists an index
$i\ge 2$ such that $X_1^{-1}X_i \le \|\ux\| < X_1^{-1}X_{i+1}$.
By virtue of the choice of the sequence $(X_i)_{i\ge 0}$, we have
\[
 \psi(\|\ux\|) \ge \psi\left(\frac{X_{i}}{X_1}\right) \ge X_1^3X_{i-1}
\]
and so Lemma \ref{lemma3} gives
\[
 |\ux\cdot\uu|
  \ge \frac{\dist(\ux,\,\{\uv,\uw\})}{X_1^3X_{i-1}\|\ux\|^\gamma}
  \ge \frac{\dist(\ux,\,\{\uv,\uw\})}{\psi(\|\ux\|)\|\ux\|^\gamma}.
\]
This proves  condition (iv) with $C'=X_2/X_1$.

Finally, to prove condition (i), suppose on the contrary
that there exists a non-zero point $\ux\in\bZ^3$ such that
$\ux\cdot\uu=0$.  For any sufficiently large index $i$,
there is an integer multiple of $\ux$ with norm between
$X_i/X_1$ and $X_{i+1}/X_1$.  Then Lemma \ref{lemma3}
shows that $\ux\in\langle \ux_{i-1},\ux_i\rangle_\bQ$
and so $\ux\in\langle \ux_{i-1},\ux_i\rangle_\bZ$ because
$(\ux_{i-1},\ux_i)$ is a primitive pair.  Since, by part 1)
of Lemma \ref{lemma2}, any
triple $(\ux_{i-1},\ux_i,\ux_{i+1})$ is linearly independent,
this implies that
\[
 \ux \in \langle \ux_{i-1},\ux_i\rangle_\bZ
      \cap \langle \ux_i,\ux_{i+1}\rangle_\bZ
 = \bZ\ux_i
\]
for each large enough $i$, a contradiction.


\end{document}